\numberwithin{equation}{section}
\newcommand{\Ddim}{\dim_\triangle}
\newcommand{\U}{\mathcal U}
\newcommand{\V}{\mathcal V}
\newcommand{\I}{\mathbb I}
\newcommand{\e}{\varepsilon}
\newtheorem{thm}{Theorem}[section]
\newtheorem{pro}[thm]{Proposition}
\newtheorem{lem}[thm]{Lemma}
\newtheorem{cor}[thm]{Corollary}
\begin{document}


\title[Spaces with fibered approximation property in dimension $n$]
{Spaces with fibered approximation property in dimension $n$}

\author{Taras Banakh}
\address{Uniwersytet Humanistyczno-Przyrodniczy Jana Kochanowskiego w Kielcach (Poland), and Ivan Franko National University of Lviv (Ukraine)}
\email{T.O.Banakh@gmail.com}

\author{Vesko  Valov}
\address{Department of Computer Science and Mathematics, Nipissing University,
100 College Drive, P.O. Box 5002, North Bay, ON, P1B 8L7, Canada}
\email{veskov@nipissingu.ca}
\thanks{The second author was partially supported by NSERC Grant 261914-08.}

\keywords{dimension, $n$-dimensional maps, fibered approximation
property, simplicial complex}

\subjclass{Primary 54F45; Secondary 55M10}


\begin{abstract}
A metric space $M$ us said to have the fibered approximation
property in dimension $n$ (br., $M\in \mathrm{FAP}(n)$) if for any
$\epsilon>0$, $m\geq 0$ and any map $g\colon\I^m\times\I^n\to M$
there exists a map $g'\colon\I^m\times\I^n\to M$ such that $g'$ is
$\epsilon$-homotopic to $g$ and $\dim
g'\big(\{z\}\times\I^n\big)\leq n$ for all $z\in\I^m$. The class of
spaces having the $\mathrm{FAP}(n)$-property is investigated in this
paper. The main theorems are applied to obtain generalizations of
some results due to Uspenskij \cite{vu} and Tuncali-Valov
\cite{tv1}.
\end{abstract}

\maketitle

\markboth{}{Fibered approximation property}



\section{Introduction}
All spaces in the paper are assumed to be Tychonoff and all maps
continuous. By $C(X,M)$ we denote all maps from $X$ into $M$.

We say that a metric space $M$ has the fibered approximation
property in dimension $n$ (br., $M\in \mathrm{FAP}(n)$), where
$n\geq 0$, if for any $\epsilon>0$, any $m\geq 0$ and any map
$g\colon\I^m\times\I^n\to M$ there exists a map
$g'\colon\I^m\times\I^n\to M$ such that $g'$ is $\epsilon$-homotopic
to $g$ and $\dim g'(\{z\}\times\I^n)\leq n$ for all $z\in\I^m$.

In the paper we investigate the class of spaces having the
$\mathrm{FAP}(n)$-property, where $n\geq 0$. According to
\cite{tv1}, this class contains all Euclidean spaces. It is shown in
Theorem 2.7 below that a complete metric space has the
$\mathrm{FAP}(n)$-property if and only if it has locally the same
property. So, any Euclidean manifolds also has the
$\mathrm{FAP}(n)$-property, $n\geq 0$. Another
$\mathrm{FAP}(n)$-spaces are described in the last section. For
example, if $M$ is a manifold modeled on the $n$-dimensional Menger
cube, or $M=\I^n$, then $M\times Z$ has the
$\mathrm{FAP}(n)$-property for any completely metrizable space $Z$.

We also introduced a subclass of $\mathrm{FAP}(n)$-spaces, the {\em
strong $\mathrm{FAP}(n)$-spaces}, see Section 4. For example, any
product of finitely many 1-dimensional completely metrizable
$\mathrm{LC}(0)$-spaces without isolated points is a strong
$\mathrm{FAP}(n)$-space for all $n\geq 0$ (Corollary 4.5).

Next theorem is the main result in this paper.

\begin{thm}\label{ap}
Let $f\colon X\to Y$ be a perfect map with $\Ddim(f)\leq n$, where
$X$ and $Y$ are paracompact spaces. If $M\in \mathrm{FAP}(n)$ is
completely metrizable, then $\mathcal{R}_n^f(Y,M)=\{g\in C(X,M):\dim
g(f^{-1}(y))\leq n\hbox{~}\hbox{~}\mbox{for all}\hbox{~} y\in Y\}$
is a $G_\delta$-subset of $C(X,M)$ and every simplicially
factorizable map in $C(X,M)$ is homotopically approximated by maps
from $\mathcal{R}_n^f(Y,M)$.
\end{thm}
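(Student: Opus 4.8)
I would work with $C(X,M)$ in the source-limitation topology, whose basic neighbourhoods of a map $g$ are the sets $\{h:d(h(x),g(x))<\alpha(x)\text{ for all }x\in X\}$ for a continuous $\alpha\colon X\to(0,\infty)$ and a fixed complete metric $d$ on $M$; here ``homotopically approximated by maps from $\mathcal R_n^f(Y,M)$'' is read as: for every such $\alpha$ there is $g'\in\mathcal R_n^f(Y,M)$ which is $\alpha$-homotopic to $g$. The two assertions are essentially independent: the first is a semicontinuity property of the fibrewise dimension and uses only that $f$ is perfect and $X,Y$ paracompact, while the second is where completeness of $M$, the hypothesis $M\in\mathrm{FAP}(n)$, the inequality $\Ddim(f)\le n$, and the simplicial factorization all enter.

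\textbf{Part 1: $\mathcal R_n^f(Y,M)$ is $G_\delta$.} Since $f$ is perfect, every fibre $f^{-1}(y)$ is compact, so $g(f^{-1}(y))$ is a compact metric space, and for such a space $\dim\le n$ holds iff it admits finite open covers of arbitrarily small mesh and order $\le n+1$. I would set, for $k\in\IN$, $V_k$ to be the set of $g\in C(X,M)$ such that for every $y\in Y$ there is a finite open family $\mathcal V$ in $M$ with $\mesh\mathcal V<1/k$, order $\le n+1$ and $g(f^{-1}(y))\subseteq\bigcup\mathcal V$; then $\mathcal R_n^f(Y,M)=\bigcap_k V_k$. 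To see $V_k$ open, fix $g\in V_k$, choose for each $y$ the corresponding finite $\mathcal V_y$, and use compactness of $f^{-1}(y)$ with perfectness of $f$ to get an open $O_y\ni y$ and $\delta_y>0$ such that every $h$ with $d(h,g)<\delta_y$ on $f^{-1}(\cl O_y)$ still has $h(f^{-1}(y'))$ covered with order $\le n+1$ by a slight shrinking of $\mathcal V_y$ for all $y'\in O_y$; then a locally finite refinement of $\{O_y\}$ (paracompactness of $Y$) with a subordinated partition of unity $\{\lambda_i\}$ produces $\alpha:=\sum_i(\lambda_i\circ f)\delta_{y_i}$, a limitation neighbourhood of $g$ inside $V_k$.

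\textbf{Part 2: approximating simplicially factorizable maps.} Write $g=h\circ p$ with $p\colon X\to|K|$ into the polytope of a simplicial complex and $h\colon|K|\to M$, and fix a continuous $\alpha\colon X\to(0,\infty)$. Using paracompactness of $X$ one may take $K$ locally finite, and using continuity of $h$ with perfectness of $f$ one may pass to a subdivision $K_1$ so fine that $\frac{\alpha}{3}$-homotopies in $M$ absorb both the motion of maps lying in a common star of $K_1$ (polytopes are locally contractible, so $h$ transports a controlled homotopy) and the diameters $\diam h(\cSt(v,K_1))$ along fibres. Next I would invoke the defining property of $\Ddim(f)\le n$ for perfect maps between paracompact spaces (cf.\ \cite{tv1}) to approximate $p$, within the star cover of $K_1$, by $p'\colon X\to|K|$ with $p'(f^{-1}(y))\subseteq|K_1^{(n)}|$, hence $\dim p'(f^{-1}(y))\le n$ and $g\simeq h\circ p'$ within $\frac{\alpha}{3}$. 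Finally I would correct the dimension growth that $h$ may cause on the $n$-skeleton: perturb $h$ on $|K_1|$ skeleton by skeleton into $h'$, agreeing with $h$ off a small neighbourhood of $|K_1^{(n)}|$, $\frac{\alpha}{3}$-homotopic to $h$, and with $\dim h'(|K_1^{(n)}|)\le n$; on each $n$-simplex $\sigma\cong\I^n$ this is a relative, parametrized $\mathrm{FAP}(n)$-perturbation $(\I^n,\partial\I^n)\to(M,\cdot)$ reducing the image to dimension $\le n$ rel $\partial\I^n$, obtained from the $\mathrm{FAP}(n)$-property applied to a collar $\I^{n-1}\times\I$ and an $\I^m$ that parametrizes the simplices of a locally finite fragment, and the locally finite sum theorem then gives $\dim h'(|K_1^{(n)}|)\le n$. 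Then $g':=h'\circ p'$ is $\alpha$-homotopic to $g$ (concatenating the three homotopies) and $g'(f^{-1}(y))\subseteq h'(|K_1^{(n)}|)$ has dimension $\le n$, so $g'\in\mathcal R_n^f(Y,M)$.

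\textbf{Expected main obstacle.} The hard part is the correction step in Part 2: manufacturing $h'$ that kills the dimension growth of $h$ on the $n$-skeleton while remaining a small homotopy and keeping the lower skeleta essentially fixed, and doing so coherently across the (merely locally finite, infinitely many) simplices over the non-compact paracompact spaces $X$ and $Y$. In particular, the ``$\mathrm{FAP}(n)$ rel boundary, with parameters'' statement needed here is genuinely stronger than the bare definition of $\mathrm{FAP}(n)$ and will presumably have to be isolated as a separate lemma --- this is precisely what the $\I^m$-parameter in the definition is for. A secondary difficulty is sequencing: the $\Ddim(f)$-approximation of $p$ and the $\mathrm{FAP}(n)$-correction of $h$ must be interlocked so that the perturbed fibres $p'(f^{-1}(y))$ land where $h$ has already been repaired, and so that all error estimates are taken with respect to the same fine triangulation $K_1$.
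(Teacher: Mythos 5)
Your Part 1 is essentially the paper's own argument (Lemmas 2.1--2.3 and Proposition 2.4): write $\mathcal R_n^f(Y,M)$ as $\bigcap_k V_k$ with $V_k$ defined by small-mesh covers of order $\le n$, prove a local openness statement at each $y$ using perfectness, and glue the local $\delta_y$'s into a continuous limitation function (the paper uses a continuous selection theorem where you use a partition of unity; both work). Part 2, however, contains a fatal step. You want to approximate $p\colon X\to|K|$ by a single $p'$ with $p'(f^{-1}(y))\subseteq|K_1^{(n)}|$ for \emph{all} $y\in Y$; since $X=\bigcup_y f^{-1}(y)$, this forces $p'(X)\subseteq|K_1^{(n)}|$, i.e.\ you must push the whole map $p$ into the global $n$-skeleton by a star-small homotopy. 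That is impossible in general and does not follow from $\Ddim(f)\le n$, which constrains $f$, not the factorization complex of $g$. Concretely, take $X=Y=M=\I^{n+1}$, $f=\mathrm{id}$ (so $\Ddim(f)=0\le n$ and the theorem is trivially true, every fibre image being a point), and $g=p=\mathrm{id}$ factored through a triangulation $K$ of $\I^{n+1}$: no map close to the identity of $\I^{n+1}$ can have image in an $n$-dimensional polyhedron, by a degree argument. The hypothesis $\dim f\le n$ bounds the dimension of the fibres $f^{-1}(y)$, which lets you push each fibre into the $n$-skeleton \emph{separately}, but these individual pushes cannot be assembled into one continuous $p'$.

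The missing idea, which is how the paper proceeds, is to triangulate the map $f$ rather than the space $X$: using $\Ddim(f)\le n$ one builds a commutative square $\beta\circ f=p\circ\alpha$ in which $\beta\colon Y\to L$ is the nerve map of a cover of $Y$, $\alpha\colon X\to K$ is a $\U$-map fine enough that $g$ is recovered up to $\e/2$ from a map $\varphi$ defined near $\alpha(X)$, and $p\colon K\to L$ is a perfect $PL$-map with $\dim p\le\Ddim(f)\le n$. Then $\alpha(f^{-1}(y))\subseteq p^{-1}(\beta(y))$, and each $p^{-1}(\beta(y))$ is an at most $n$-dimensional compact subcomplex of $K$ --- but these are the \emph{fibres of the $PL$-map}, varying with $y$ and not contained in any fixed skeleton of $K$. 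The $\mathrm{FAP}(n)$ correction is then applied to $\varphi$ so as to make $\dim\varphi_1(p^{-1}(z))\le n$ for all $z\in L$ (Lemma 3.2), by induction over the skeleta of the \emph{base} $L$ with a relative version on each simplex (Lemma 3.1); the $\I^m$-factor in the definition of $\mathrm{FAP}(n)$ is exactly the base-simplex parameter in this fibred statement (a simplicial map of dimension $\le n$ over a simplex embeds into $\sigma\times\sigma_n$), not a device for indexing the simplices of a locally finite complex. Your step (b) --- the relative, parametrized perturbation --- is the right kind of lemma, but it must be aimed at the $PL$-fibres $p^{-1}(z)$ rather than at the $n$-skeleton, and without the commutative square tying $K$ to $Y$ there is no way to convert control over subsets of $|K|$ into the required bound on $\dim g'(f^{-1}(y))$.
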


\begin{cor}
Let $f\colon X\to Y$ be a perfect $0$-dimensional surjection between
paracompact spaces and $M$ a completely metrizable $\mathrm{ANR}$.
Then the maps $g\in C(X,M)$ such that $\dim g(f^{-1}(y))=0$ for all
$y\in Y$ form a dense $G_\delta$-subset of $C(X,M)$.
\end{cor}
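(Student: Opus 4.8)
The plan is to derive the Corollary as a direct specialization of Theorem~\ref{ap}. First I would observe that a perfect $0$-dimensional surjection $f\colon X\to Y$ satisfies $\Ddim(f)\le n$ for every $n\ge 0$ — in particular for $n=0$ — since the relevant dimension-like invariant $\Ddim$ of a $0$-dimensional perfect map is $0$, and trivially $0\le 0$. Next I would note that a completely metrizable $\mathrm{ANR}$ $M$ lies in $\mathrm{FAP}(0)$: this follows because $\mathrm{ANR}$'s are locally contractible (hence locally connected, and locally of covering dimension $\ge$ anything is not needed here — what is needed is that any Euclidean space is in $\mathrm{FAP}(n)$, and that $\mathrm{FAP}(n)$ is a local property for complete metric spaces by Theorem~2.7, so since $M$ is locally homotopy-equivalent to open subsets of simplicial complexes / CW-structures whose cells give the required fibered dimension control, $M$ is locally in $\mathrm{FAP}(0)$, hence globally). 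With $n=0$, the set $\mathcal R_0^f(Y,M)=\{g\in C(X,M):\dim g(f^{-1}(y))\le 0\ \text{for all}\ y\in Y\}=\{g:\dim g(f^{-1}(y))=0\}$ (equality, not just $\le$, holds because fibers are nonempty and a point has dimension $0$), so Theorem~\ref{ap} immediately gives that $\mathcal R_0^f(Y,M)$ is a $G_\delta$-subset of $C(X,M)$.

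It then remains to upgrade ``$G_\delta$'' to ``dense $G_\delta$''. By Theorem~\ref{ap}, every simplicially factorizable map in $C(X,M)$ is homotopically (in particular, uniformly) approximated by maps from $\mathcal R_0^f(Y,M)$. So the second step is to show that the simplicially factorizable maps are dense in $C(X,M)$; granting that, density of $\mathcal R_0^f(Y,M)$ follows since a dense set is contained in the closure of another set whose closure contains it. To see that simplicially factorizable maps are dense, I would use that $M$ is an $\mathrm{ANR}$: cover $M$ by a locally finite open cover with contractible (or at least ``small'' and nicely-meshed) members, take the nerve $N$ of this cover — a simplicial complex — and the canonical map $X\to N$ followed by a partial realization $|N|\to M$; standard $\mathrm{ANR}$-theory (partitions of unity, the nerve construction, and the fact that $X$ is paracompact) yields a map $X\to |N|\to M$ that is $\epsilon$-close to the given $g$, which is by construction simplicially factorizable.

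The main obstacle I anticipate is verifying cleanly that a completely metrizable $\mathrm{ANR}$ belongs to $\mathrm{FAP}(0)$. One must check the defining condition: for every $\epsilon>0$, $m\ge 0$ and $g\colon\I^m\times\I^0\to M$, i.e.\ $g\colon\I^m\to M$, there is $g'$ that is $\epsilon$-homotopic to $g$ with $\dim g'(\{z\}\times\I^0)=\dim g'(\{z\})\le 0$ for all $z$ — but each $g'(\{z\})$ is a single point, so this dimension condition is automatic and $g'=g$ works! Thus in fact $\mathrm{FAP}(0)$ holds for \emph{every} metric space $M$, and no $\mathrm{ANR}$ hypothesis is needed for this particular point; the $\mathrm{ANR}$ hypothesis is used only to guarantee density of simplicially factorizable maps and complete metrizability is used to invoke Theorem~\ref{ap}. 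So the genuine content of the proof is entirely in the density-of-simplicially-factorizable-maps step, which is a routine but careful $\mathrm{ANR}$/nerve argument; assembling these pieces gives the Corollary.
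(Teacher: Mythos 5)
Your proposal is correct and follows essentially the same route as the paper: specialize Theorem~1.1 to $n=0$ using that $\Ddim(f)=0$ for a perfect $0$-dimensional map, that $\mathrm{FAP}(0)$ holds trivially for every metrizable space (a fact the paper itself records in Section~4), and that simplicially factorizable maps are dense in $C(X,M)$ when $M$ is an $\mathrm{ANR}$ (which the paper simply cites from \cite{bv} rather than re-deriving via the nerve construction as you sketch). Your self-correction that the $\mathrm{ANR}$ hypothesis is needed only for the density of simplicially factorizable maps, not for $\mathrm{FAP}(0)$, is exactly right.
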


Corollary 1.2 was obtained in \cite{tv1} in the particular case when
$Y$ is a $C$-space and $M$ an Euclidean space (see also \cite{vu}
for the case $X$ compact, $Y$ a $C$-space and $M=\I$).

\begin{cor}
Let $M\in \mathrm{FAP}(n)$ be a completely metrizable $\mathrm{ANR}$
and $f\colon X\to Y$  a perfect $n$-dimensional surjection between
paracompact spaces with $Y$ being a $C$-space.   Then the maps $g\in
C(X,M)$ such that $\dim g(f^{-1}(y))\leq n$ for all $y\in Y$ form a
dense $G_\delta$-subset of $C(X,M)$.
\end{cor}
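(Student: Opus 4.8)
The plan is to derive Corollary 1.3 from Theorem 1.1 by supplying the two missing ingredients that upgrade the conclusion of that theorem — namely, that $\mathcal R_n^f(Y,M)$ is merely a $G_\delta$-subset approximated by simplicially factorizable maps — into the stronger statement that $\mathcal R_n^f(Y,M)$ is \emph{dense} $G_\delta$. Since $f$ is assumed to be a perfect $n$-dimensional surjection between paracompact spaces, Theorem 1.1 already gives us $\Ddim(f)\le n$ (after noting $\dim f\le n$ implies the relevant triangle-dimension bound) and hence that $\mathcal R_n^f(Y,M)$ is a $G_\delta$-subset of $C(X,M)$, provided $M\in\mathrm{FAP}(n)$ is completely metrizable, which holds here since $M$ is in addition assumed to be an $\mathrm{ANR}$. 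It remains to prove density, and for this I would invoke the $C$-space hypothesis on $Y$.

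\textbf{Step 1: reduce density to approximating arbitrary maps by simplicially factorizable ones.} Theorem 1.1 tells us that every simplicially factorizable map in $C(X,M)$ is homotopically (hence, fixing a metric on $M$, $\epsilon$-)approximated by maps from $\mathcal R_n^f(Y,M)$. Therefore, to get density of $\mathcal R_n^f(Y,M)$ in $C(X,M)$ it suffices to show that the simplicially factorizable maps are themselves dense in $C(X,M)$. This is where the $C$-space property of $Y$ enters: for a perfect map $f\colon X\to Y$ with $Y$ a paracompact $C$-space and $M$ an $\mathrm{ANR}$, every map $g\colon X\to M$ can be approximated by a map that factors (up to small homotopy) through a map $X\to N$ into a simplicial complex $N$ and then a map $N\to M$ — one builds $N$ as the nerve of a suitable locally finite open cover of $X$ refining $g^{-1}$ of a cover witnessing the $\mathrm{ANR}$ property of $M$, using that $X$, being perfect over a paracompact $C$-space, is itself paracompact and the $C$-space condition provides the needed finite-dimensional-like control on pulled-back covers. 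I would spell out this nerve construction, invoking the $C$-space property in the standard way to obtain the refinement on which the partition of unity is built.

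\textbf{Step 2: assemble the pieces.} Combining Step 1 (density of simplicially factorizable maps) with the approximation clause of Theorem 1.1 (simplicially factorizable maps are $\epsilon$-close to members of $\mathcal R_n^f(Y,M)$) gives density of $\mathcal R_n^f(Y,M)$ in $C(X,M)$; together with the $G_\delta$ clause of Theorem 1.1 this yields that $\mathcal R_n^f(Y,M)$ is a dense $G_\delta$-subset of $C(X,M)$, which is exactly the assertion of Corollary 1.3, since $\mathcal R_n^f(Y,M)$ is precisely the set of $g\in C(X,M)$ with $\dim g(f^{-1}(y))\le n$ for all $y\in Y$.

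\textbf{Main obstacle.} The $G_\delta$ and the ``approximation by $\mathcal R_n^f$'' parts are handed to us by Theorem 1.1, so the only real work is Step 1: proving that on a space $X$ perfect over a paracompact $C$-space $Y$ the simplicially factorizable maps into an $\mathrm{ANR}$ $M$ are dense. The delicate point is that we do not assume $X$ or $Y$ finite-dimensional, so the nerve $N$ of the cover cannot be taken finite-dimensional outright; instead one must use the $C$-space property of $Y$ to choose, for each given $\epsilon$ and each open cover, a sequence of refinements whose fiberwise traces become ``eventually disjoint'', so that the resulting partition of unity over $X$ is \emph{locally finite} and the nerve is a genuine (possibly infinite-dimensional but locally finite-dimensional) simplicial complex through which $g$ factors up to $\epsilon$-homotopy. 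Making this factorization compatible with the perfectness of $f$ — so that Theorem 1.1's hypotheses genuinely apply to the factored map — is the step I expect to require the most care.
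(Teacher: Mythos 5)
Your overall strategy --- deduce the corollary from Theorem 1.1 by checking that (a) $\Ddim(f)\le n$ and (b) the simplicially factorizable maps are dense in $C(X,M)$ --- is the same as the paper's, but you have attached the $C$-space hypothesis to the wrong ingredient, and this leaves a genuine gap at (a). You assert that $\dim f\le n$ ``implies the relevant triangle-dimension bound'' $\Ddim(f)\le n$ as if this were automatic for a perfect map between paracompact spaces. It is not: in general one only knows $\dim f\le \Ddim(f)$, and the only unconditional reverse estimate recorded in the paper is $\Ddim(f)\le \dim f+1$ for compact $X,Y$. The equality $\Ddim(f)=\dim f$, which is what actually yields $\Ddim(f)\le n$ here, holds precisely because $Y$ is a $C$-space (or when $\dim f=0$); this is item (iii) of the list in the Introduction and is the step the paper itself singles out at the start of Section 3. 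Your proof never supplies this justification, so as written the hypothesis $\Ddim(f)\le n$ of Theorem 1.1 is not verified.

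Conversely, the place where you do invoke the $C$-space property --- proving density of the simplicially factorizable maps --- does not need it. Since $M$ is an $\mathrm{ANR}$, the simplicially factorizable maps are already dense in $C(X,M)$ (case (i) of the criterion quoted in the Introduction; Section 3 even notes that every $g\in C(X,M)$ is simplicially factorizable in this case). The nerve construction you sketch requires only paracompactness of $X$ and the $\mathrm{ANR}$ property of $M$; the ``fiberwise eventually disjoint refinements'' controlled by the $C$-space structure of $Y$ that you identify as the main obstacle are not part of that argument and would not be the correct mechanism in any case (the $C$-space alternative in the cited criterion concerns $X$, not $Y$). Once the two uses of the hypotheses are swapped into their correct places --- $C$-space of $Y$ for $\Ddim(f)=\dim f\le n$, $\mathrm{ANR}$ for density of simplicially factorizable maps --- the corollary follows from Theorem 1.1 exactly as you intend.
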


The version of Corollary 1.3 with $M$ being an Euclidean space was
established in \cite{tv1}.

Let us explain the notions in Theorem 1.1. A map $g\in C(X,M)$ is
homotopically approximated by maps from $\mathcal{H}$ means that for
every function $\e\in C(X,(0,1])$ there exists $g'\in\mathcal{H}$
which is $\e$-homotopic to $g$. Here, the maps $g$ and $g'$ are said
to be $\e$-homotopic, if there is a homotopy $h\colon X\times\I\to
M$ connecting $g$ and $g'$ such that each set $h(\{x\}\times\I)$ has
a diameter $<\e(x)$, $x\in X$.

The function space $C(X,M)$ appearing in this theorem is endowed
with the source limitation topology whose neighborhood base at a
given function $f\in C(X,M)$ consists of the sets
$$B_\rho(f,\e)=\{g\in C(X,M):\rho(g(x),f(x))<\e(x)\hbox{~}\forall\hbox{~}x\in X\},$$ where $\rho$ is a
fixed compatible metric on $M$ and $\e:X\to(0,1]$ runs over
continuous positive functions on $X$. If $X$ is paracompact, the
source limitation topology doesn't depend on the metric $\rho$ and
it has the Baire property provided $M$ is completely metrizable.

We say that a map $g\colon X\to M$ is simplicially factorizable
\cite{bv} if there exists a simplicial complex $L$ and two maps
$g_1\colon X\to L$ and $g_2\colon L\to M$ such that $g=g_2\circ
g_1$. In each of the following cases the set of simplicially
factorizable maps is dense in $C(X,M)$ (see \cite[Proposition
4]{bv}): (i) $M$ is an $\mathrm{ANR}$; (ii) $\dim X\leq k$ and $M$
is $\LC[k-1]$; (iii) $X$ is a $C$-space and $M$ is locally
contractible.

The dimension $\Ddim(f)$ was defined in \cite{bv}: $\Ddim(f)$ of a
map $f:X\to Y$ is equal to the smallest cardinal number $\tau$ with
the following property: for every open cover $\U$ of $X$ there is a
map $g:X\to \mathbb I^\tau$ such that the diagonal product $f\Delta
g:X\to Y\times \mathbb I^\tau$ is a $\U$-disjoint map. The last one
means that every $z\in (f\Delta g)(X)$ has a neighborhood $V$ such
that $(f\Delta g)^{-1}(V)$ is the union of a disjoint open in $X$
family refining $\U$. According to results from \cite{lev},
\cite{bp:98} and \cite{tv}, for any perfect map $f:X\to Y$ between
paracompact spaces we have: (i) $\dim f\leq \Ddim(f)$; (ii)
$\Ddim(f)=0$ iff $\dim f=0$; (iii) $\Ddim(f)=\dim f$ if $Y$ a
$C$-space; (iv) $\Ddim(f)\leq\dim f+1$ if the spaces $X,Y$ are
compact.


\section{Some properties of $\mathrm{FAP}(n)$-spaces}


Suppose that $(M,\rho)$ is  a  complete metric space and $Z\subset
M$ a closed set. If $f\colon X\to Y$ is a perfect surjective map
such that $X$ and $Y$ are paracompact and  $\dim f\leq n$, let



$$\mathcal{R}_n^f(H,Z)=\{g\in C(X,M):\dim g(f^{-1}(y))\cap Z\leq n\hbox{~}\forall y\in H\hbox{~}\}$$
with $H\subset Y$. Let also $\mathcal{R}_n^f(H,Z,k)$, where
$H\subset Y$ and $k\geq 1$, denote the set of all maps $g\in C(X,M)$
satisfying the following condition:

\begin{itemize}
\item Each set $\Gamma(g,y)=g(f^{-1}(y))\cap Z$, $y\in H$,
can be covered by an open family $\gamma(g,y)$ in $M$ of mesh $\leq
1/k$ and order $\leq n$.
\end{itemize}
Recall that the order of $\gamma(g,y)$ is $\leq n$ provided any
point of $M$ is contained in at most $n+1$ elements of
$\gamma(g,y)$.

\begin{lem}\label{intersection}
$\mathcal{R}_n^f(H,Z)$ is the intersection of all
$\mathcal{R}_n^f(H,Z,k)$, $k\geq 1$, for any $H\subset Y$.
\end{lem}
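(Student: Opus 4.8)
The plan is to prove the two inclusions $\mathcal{R}_n^f(H,Z)\subseteq\bigcap_{k\geq 1}\mathcal{R}_n^f(H,Z,k)$ and $\bigcap_{k\geq 1}\mathcal{R}_n^f(H,Z,k)\subseteq\mathcal{R}_n^f(H,Z)$ separately, the whole argument being a routine unpacking of the definition of covering dimension for subsets of a metric space together with compactness of the fibers. Fix $g\in C(X,M)$ and $y\in H$, and write $\Gamma(g,y)=g(f^{-1}(y))\cap Z$. Since $f$ is perfect, $f^{-1}(y)$ is compact, hence $g(f^{-1}(y))$ is compact and $\Gamma(g,y)$, being a closed subset of it (as $Z$ is closed in $M$), is a compact metric space.

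For the first inclusion, suppose $g\in\mathcal{R}_n^f(H,Z)$, so $\dim\Gamma(g,y)\leq n$ for every $y\in H$. Fix $k\geq 1$ and $y\in H$. Cover $\Gamma(g,y)$ by the $\rho$-balls of radius $1/(2k)$ centered at its points; by compactness extract a finite subcover $\U$, which has $\mesh\leq 1/k$. Since $\dim\Gamma(g,y)\leq n$, the finite open cover $\{U\cap\Gamma(g,y):U\in\U\}$ of $\Gamma(g,y)$ admits a finite open (in $\Gamma(g,y)$) shrinking of order $\leq n$; pulling each shrinking member back to an open subset of $M$ contained in the corresponding $U$ (possible since $\Gamma(g,y)$ is a subspace and the members can be chosen as $V\cap\Gamma(g,y)$ with $V$ open in $M$, $V\subseteq U$), we obtain an open family $\gamma(g,y)$ in $M$ covering $\Gamma(g,y)$, of mesh $\leq 1/k$ and order $\leq n$. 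Hence $g\in\mathcal{R}_n^f(H,Z,k)$, and since $k$ was arbitrary, $g\in\bigcap_{k\geq 1}\mathcal{R}_n^f(H,Z,k)$.

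For the reverse inclusion, suppose $g\in\bigcap_{k\geq 1}\mathcal{R}_n^f(H,Z,k)$ and fix $y\in H$. For each $k$ we are given an open family $\gamma(g,y,k)$ in $M$ covering $\Gamma(g,y)$ with $\mesh\leq 1/k$ and order $\leq n$. Intersecting with $\Gamma(g,y)$ gives an open cover of $\Gamma(g,y)$ of mesh $\leq 1/k$ and order $\leq n$; thus $\Gamma(g,y)$ admits open covers of arbitrarily small mesh and order $\leq n$. By the standard characterization of covering dimension in (compact) metric spaces, this forces $\dim\Gamma(g,y)\leq n$. As $y\in H$ was arbitrary, $g\in\mathcal{R}_n^f(H,Z)$, completing the proof.

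I do not anticipate a genuine obstacle here; the only point requiring minor care is the passage between covers of the subspace $\Gamma(g,y)$ and open families in the ambient space $M$ — one must observe that shrinking/order conditions transfer correctly, using that $\Gamma(g,y)$ carries the subspace topology and that the order of a family is unchanged when each member is replaced by a subset, while covering is preserved when one keeps each member large enough to contain its trace on $\Gamma(g,y)$. Everything else is the textbook equivalence between "$\dim K\leq n$" and "$K$ has finite open covers of arbitrarily small mesh with order $\leq n$" for compact metric $K$.
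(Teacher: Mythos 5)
Your proof is correct and follows essentially the same route as the paper's: both directions are the straightforward unpacking of the definition of covering dimension via open covers of arbitrarily small mesh and order $\leq n$. You add useful detail the paper omits (compactness of $\Gamma(g,y)$ and the passage from subspace-open shrinkings to ambient-open families of the same order), but the argument is the same.
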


\begin{proof}
Let $g\in\mathcal R_n^f(H,Z)$. Then $\dim \Gamma(g,y)\leq n$ for all
$y\in H$. Hence, $\Gamma(g,y)$ admits an open in $M$ cover of mesh
$\leq 1/k$ and order $\leq n$ for any $k\geq 1$ and $y\in H$.
Therefore, $\mathcal R_n^f(H,Z)$ is contained in the intersection of
all $\mathcal{R}_n^f(H,Z,k)$, $k\geq 1$. On the other hand, if $g\in
C(X,M)$ belongs to this intersection and $y\in Y$ is fixed, then
each $\Gamma(g,y)$ admits open covers of arbitrary small mesh and
order $\leq n$. So, $\dim\Gamma(g,y)\leq n$ and $g\in\mathcal
R_n^f(H,Z)$.
\end{proof}

\begin{lem}\label{nbd}
Suppose $X$ and $Y$ are metric spaces and
$g\in\mathcal{R}_n^f(y,Z,k)$ for some $y\in Y$ and $k\geq 1$. Then
there exists a neighborhood $V_y$ of $y$ in $Y$ and $\delta_y>0$
such that $g'\in\mathcal{R}_n^f(y',Z,k)$ provided $y'\in V_y$ and
$g'\in C(X,M)$ with $\rho(g'(x),g(x))<\delta_y$ for all $x\in
f^{-1}(y')$. The same conclusion remains true if $Z=M$ and $X, Y$
paracompact.
\end{lem}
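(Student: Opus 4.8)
The plan is to exploit the fact that $g \in \mathcal{R}_n^f(y,Z,k)$ gives us a finite open family $\gamma(g,y)$ in $M$ of mesh $\le 1/k$ and order $\le n$ covering the compact set $\Gamma(g,y) = g(f^{-1}(y)) \cap Z$ (compact because $f$ is perfect, so $f^{-1}(y)$ is compact). First I would shrink each member of $\gamma(g,y)$ slightly: since $\Gamma(g,y)$ is compact and covered by the open family $\gamma(g,y) = \{W_1,\dots,W_s\}$, there is a closed cover $\{F_1,\dots,F_s\}$ of $\Gamma(g,y)$ with $F_i \subset W_i$, and then a number $\delta_y > 0$ (a Lebesgue-type constant) such that the $\delta_y$-neighborhood of each $F_i$ still lies in $W_i$. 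Equivalently, one can take $\delta_y$ so small that any set meeting $\Gamma(g,y)$ and of diameter $< \delta_y$ is contained in some $W_i$; combined with the compactness argument this controls how far a perturbation can push points of $\Gamma$ while staying inside the family.

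Next I would produce the neighborhood $V_y$ of $y$. The key point is that the compact-set-valued assignment $y' \mapsto f^{-1}(y')$ is upper semicontinuous (as $f$ is perfect), and $g$ is continuous, so $y' \mapsto g(f^{-1}(y'))$ is upper semicontinuous; hence there is a neighborhood $V_y$ of $y$ such that $g(f^{-1}(V_y)) \subset O_{\delta_y/2}(g(f^{-1}(y)))$, the $\delta_y/2$-neighborhood. Now take any $y' \in V_y$ and any $g' \in C(X,M)$ with $\rho(g'(x),g(x)) < \delta_y/2$ for all $x \in f^{-1}(y')$. For $x \in f^{-1}(y')$ with $g'(x) \in Z$, the point $g(x)$ lies within $\delta_y/2$ of $\Gamma$-adjacent points; tracking the triangle inequality, $g'(x)$ lies within $\delta_y$ of $\Gamma(g,y)$, so $g'(x)$ belongs to the $\delta_y$-neighborhood of some $F_i$ and hence to $W_i$. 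Therefore the same family $\gamma(g,y)$ — which has mesh $\le 1/k$ and order $\le n$ — covers $\Gamma(g',y') = g'(f^{-1}(y')) \cap Z$, witnessing $g' \in \mathcal{R}_n^f(y',Z,k)$. (Minor care: replace $\delta_y$ by $\delta_y/2$ throughout, or just relabel, so the hypothesis $\rho(g'(x),g(x)) < \delta_y$ suffices.)

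For the second assertion, where $Z = M$ and $X,Y$ are merely paracompact (not metric), the subtlety is that "$f$ perfect between paracompact spaces" still gives $f^{-1}(y)$ compact and the map $y' \mapsto f^{-1}(y')$ upper semicontinuous, and $M$ is still metric, so the image $g(f^{-1}(y'))$ is a compact metric set and all the mesh/order estimates take place in $M$. Thus the only place metrizability of $X$ or $Y$ was used is to phrase "$\delta_y$-closeness on $f^{-1}(y')$", which makes sense for any $X$, and to get a neighborhood $V_y$ from upper semicontinuity of $y' \mapsto g(f^{-1}(y'))$ into $M$ — valid for paracompact (indeed arbitrary) $Y$. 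Since $Z = M$, the condition "$g'(x) \in Z$" is automatic, which actually simplifies the argument: every point of $g'(f^{-1}(y'))$ must land in some $W_i$, and the verification is identical.

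The main obstacle I anticipate is purely bookkeeping: getting the constants in the right order so that a single $\delta_y$ simultaneously (a) forces $\delta_y$-neighborhoods of the shrunken closed sets $F_i$ back into $W_i$, and (b) leaves enough room, after the upper-semicontinuity jump from $f^{-1}(y)$ to $f^{-1}(y')$ and the perturbation from $g$ to $g'$, for two triangle-inequality steps. Choosing, say, the $F_i$ first, then $\delta_y$ as one-third of the Lebesgue number of $\{W_i\}$ relative to $\bigcup F_i$, then $V_y$ via upper semicontinuity at scale $\delta_y$, makes everything fit; there is no genuine topological difficulty beyond the standard compactness and upper-semicontinuity facts for perfect maps.
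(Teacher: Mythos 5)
Your argument for the case $Z=M$ (and hence the second assertion of the lemma) is sound and essentially the paper's: there the whole perturbed fiber image must stay inside $G=\bigcup\gamma(g,y)$, and positivity of $\rho\bigl(g(f^{-1}(y)),M\setminus G\bigr)$ plus upper semicontinuity of $y'\mapsto f^{-1}(y')$ does the job. But in the main case $Z\subsetneq M$ there is a genuine gap. You control the distance from the perturbed points to the \emph{good} set $\Gamma(g,y)=g(f^{-1}(y))\cap Z$, via the chain ``$g(x)$ is $\delta_y/2$-close to $g(f^{-1}(y))$, hence $g'(x)$ is $\delta_y$-close to $\Gamma(g,y)$.'' That inference is false: for $x$ in a nearby fiber, $g(x)$ is close to some point $p\in g(f^{-1}(y))$, but $p$ need not lie in $Z$, so $p$ need not be in $\Gamma(g,y)$ and need not be covered by any $W_i$. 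A point $g(x)\notin Z$ can sit arbitrarily close to the part of $Z$ lying \emph{outside} your cover, and a perturbation smaller than your Lebesgue-type $\delta_y$ can push it into $Z\setminus\bigcup W_i$, so that $\Gamma(g',y')\not\subset\bigcup W_i$. Concretely: take $n=0$, $M=\IR^2$, $Z=[0,1]\times\{0\}$, $f^{-1}(y)=[-1,1]$, $g(t)=(t,c\,e^{-1/t^2})$ with $g(0)=(0,0)$ and $c$ tiny. Then $\Gamma(g,y)=\{(0,0)\}$, your $\delta_y$ is computed from a single small ball around the origin and is independent of $c$, yet the vertical projection $g'$ onto the axis is $c$-close to $g$ and has $\Gamma(g',y)\supset[0,1]\times\{0\}$, a connected set of diameter $1$, which admits no disjoint open cover of mesh $\leq 1/k$. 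So your $\delta_y$ is simply not small enough.

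The paper's proof avoids this by measuring distance to the \emph{bad} set instead: with $\Pi=M\setminus G$ it observes that $Z\cap\Pi$ is closed and disjoint from $g(f^{-1}(y))$, and produces $V_y$ with $\delta_y=\rho\bigl(g(f^{-1}(V_y)),Z\cap\Pi\bigr)>0$ by a sequence/perfectness argument; then any $\delta_y$-perturbation of $g$ over $f^{-1}(y')$ sends no point into $Z\cap\Pi$, so the $Z$-trace of the new fiber image stays in $G$ and the \emph{same} family $\gamma(g,y)$ covers it (no shrinking to closed sets $F_i$ or Lebesgue number is needed). In the example above this $\delta_y$ is of order $c\,e^{-1/r^2}$ and correctly shrinks with $c$. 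Note also that the paper's device $\Pi=Z$ handles the case $\Gamma(g,y)=\varnothing$, where your cover is empty and no Lebesgue constant exists. To repair your write-up, replace your choice of $\delta_y$ by the distance from $g(f^{-1}(V_y))$ to $Z\cap(M\setminus\bigcup W_i)$ and discard the $F_i$'s.
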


\begin{proof}
Assume first that $X$ and $Y$ are metric spaces. In case
$\Gamma(g,y)\neq\varnothing$, it can be covered by an open in $M$
family $\gamma(g,y)$ of mesh $\leq 1/k$ and order $\leq n$. Let
$G=\cup\gamma(g,y)$ and $\Pi=M\backslash G$. If
$\Gamma(g,y)=\varnothing$, let $\Pi=Z$. Hence, in both cases we have
$$Z\cap\Pi\cap g(f^{-1}(y))=\varnothing.\leqno{(1)}$$
It suffices to show there exists a neighborhood $V_y$ of $y$ in $Y$
such that
$$\delta_y=\rho\big(g(f^{-1}(V_y)),Z\cap\Pi\big)>0.$$  Indeed, otherwise
there would be a sequence $\{x_i\}_{i\geq 1}\subset X$ such that
$\{f(x_i)\}_{i\geq 1}$ converges to $y$ and
$\rho\big(g(x_i),Z\cap\Pi\big)\leq 1/i$, $i\geq 1$. Passing to a
subsequence, we may assume that $\{x_i\}_{i\geq 1}$ also converges
to a point $x\in f^{-1}(y)$. So, $g(x)\in Z\cap\Pi\cap
g(f^{-1}(y))$, which contradicts $(1)$.

If $Z=M$, we let $G=\cup\gamma(g,y)$ and
$\displaystyle\delta_y=\frac{1}{2}\rho\big(g(f^{-1}(y)),M\backslash
G\big)$, where $\gamma(g,y)$ is as above. Using that $f$ is perfect,
we can find a neighborhood $V_y$ of $y$ in $Y$ such that
$\rho\big(g(f^{-1}(V_y)),M\backslash G\big)\geq\delta_y$. Then $V_y$
and $\delta_y$ are as required.
\end{proof}

\begin{lem}\label{open}
Let $H\subset Y$ be closed. Then every $\mathcal{R}_n^f(H,Z,k)$ is
open in $C(X,M)$ in each of the following two cases: $(i)$ $Z\subset
M$ is closed and both $X$ and $Y$ are metric spaces; $(ii)$ $Z=M$
and $X,Y$ are paracompact.
\end{lem}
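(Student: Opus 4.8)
The plan is to deduce openness of $\mathcal{R}_n^f(H,Z,k)$ from Lemma \ref{nbd} together with the fact that $H$ is closed (hence, since $f$ is perfect and $Y$ is paracompact, $f^{-1}(H)$ is closed in $X$ and the restriction $f|f^{-1}(H)\colon f^{-1}(H)\to H$ is again perfect with the same fiber structure). Fix $g\in\mathcal{R}_n^f(H,Z,k)$; I must produce a source-limitation neighborhood of $g$ contained in $\mathcal{R}_n^f(H,Z,k)$, i.e.\ a continuous $\e\colon X\to(0,1]$ with $B_\rho(g,\e)\subset\mathcal{R}_n^f(H,Z,k)$. First I would apply Lemma \ref{nbd} at each $y\in H$ (in case (i) with $Z\subset M$ closed and $X,Y$ metric, in case (ii) with $Z=M$ and $X,Y$ paracompact): this gives an open $V_y\ni y$ in $Y$ and a constant $\delta_y>0$ such that any $g'\in C(X,M)$ with $\rho(g'(x),g(x))<\delta_y$ for all $x\in f^{-1}(y')$ (any $y'\in V_y$) satisfies $g'\in\mathcal{R}_n^f(y',Z,k)$.

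Next I would pass from the pointwise data $\{(V_y,\delta_y)\}_{y\in H}$ to a single continuous positive function on $X$. Cover $H$ by $\{V_y\}_{y\in H}$; since $H$ is closed in the paracompact space $Y$ and $\{V_y\}\cup\{Y\setminus H\}$ is an open cover of $Y$, take a locally finite open refinement and a subordinate partition of unity, or more simply use paracompact normality to get a locally finite open cover $\{W_\alpha\}$ of $Y$ refining $\{V_y\}_{y\in H}\cup\{Y\setminus H\}$, with a chosen $y(\alpha)\in H$ for each $\alpha$ with $W_\alpha\subset V_{y(\alpha)}$. Using a partition of unity $\{\phi_\alpha\}$ subordinate to $\{W_\alpha\}$, define a continuous function $\delta\colon Y\to(0,1]$ with $\delta(y')\le\delta_{y(\alpha)}$ whenever $\phi_\alpha(y')>0$ (for instance $\delta=\min$ over the finitely many relevant indices, suitably smoothed, or $\delta(y')=\sum_\alpha\phi_\alpha(y')\min\{\delta_{y(\alpha)},1\}$ — one checks this is $\le\delta_{y(\alpha)}$ on $W_\alpha\cap H$ after noting that on $H$ the chart $Y\setminus H$ contributes nothing). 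Then set $\e=\delta\circ f$ on $X$; this is continuous and positive because $f$ is continuous, and for $g'\in B_\rho(g,\e)$ and any $y'\in H$ every $x\in f^{-1}(y')$ lies in some $W_\alpha$-fiber with $\rho(g'(x),g(x))<\e(x)=\delta(y')\le\delta_{y(\alpha)}$, so Lemma \ref{nbd} yields $g'\in\mathcal{R}_n^f(y',Z,k)$; as $y'\in H$ was arbitrary, $g'\in\mathcal{R}_n^f(H,Z,k)$.

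The main obstacle is the globalization step: Lemma \ref{nbd} only controls a single fiber $f^{-1}(y')$ at a time with a local constant $\delta_y$, and I must weld these constants into one continuous $\e$ on all of $X$ that simultaneously does the job over every $y'\in H$. This is where closedness of $H$ and paracompactness of $Y$ (so that $H$ is normally situated and partitions of unity exist) are essential; in case (i) the metric hypothesis on $Y$ makes this routine via a function like $y'\mapsto\inf_y(\delta_y$ weighted by distance to $Y\setminus V_y)$, while in case (ii) one leans on the paracompact partition-of-unity argument above. A minor point to verify is that the construction genuinely gives $\e$ valued in $(0,1]$ and that local finiteness keeps the $\min$/sum continuous; these are standard. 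Once $\e$ is in hand, membership of $B_\rho(g,\e)$ in $\mathcal{R}_n^f(H,Z,k)$ is immediate from Lemma \ref{nbd}, completing the proof in both cases.
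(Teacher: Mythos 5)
Your proposal is correct and follows essentially the same route as the paper: both apply Lemma~\ref{nbd} at each $y\in H$ and then use paracompactness to weld the local constants $\delta_y$ into a continuous positive function on $Y$ pulled back through $f$ --- the paper does the welding via the lower semicontinuous multivalued map $y\mapsto\bigcup\{(0,\delta_z]:y\in V_z\}$ on $H$ and a continuous selection extended over $Y$, where you use a partition of unity, an equivalent device. One small inaccuracy: the weighted sum $\sum_\alpha\phi_\alpha(y')\min\{\delta_{y(\alpha)},1\}$ need not satisfy $\delta(y')\le\delta_{y(\alpha)}$ for \emph{every} $\alpha$ with $\phi_\alpha(y')>0$ (a convex combination is only bounded by its largest term), but your final verification only needs this for \emph{some} such $\alpha$, which does hold, so the argument stands.
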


\begin{proof}
The lemma follows from the proof of \cite[Proposition 3.3]{bv1}. For
completeness, we provide the arguments. We consider only the first
case, the second one is similar. Suppose
$g_0\in\mathcal{R}_n^f(H,Z,k)$. Then, by Lemma~\ref{nbd}, for every
$y\in H$ there exist a neighborhood $V_y$ and a positive
$\delta_y\leq 1$ such that $g\in\mathcal{R}_n^f(y',Z,k)$ for any
$y'\in V_y$ provided $g|f^{-1}(y')$ is $\delta_y$-close to
$g_0|f^{-1}(y')$. The family $\{V_y\cap H:y\in H\}$ can be supposed
to be locally finite in $H$. Then the set-valued map $\varphi\colon
H\to (0,1]$, $\varphi(y)=\cup\{(0,\delta_z]:y\in V_z\}$ is lower
semi-continuous. By \cite[Theorem 6.2, p.116]{rs}, $\varphi$ admits
a continuous selection $\beta\colon H\to (0,1]$. Let
$\overline{\beta}:Y\to (0,1]$ be a continuous extension of $\beta$
and $\alpha=\overline{\beta}\circ f$. It suffices to show that if
$g\in C(X,M)$ with $\rho\big(g_0(x),g(x)\big)<\alpha(x)$ for all
$x\in X$, then $g\in\mathcal{R}_n^f(y,Z,k)$ for every $y\in H$. So,
we take such a $g$ and fix $y\in H$. Then there exists $z\in H$ with
$y\in V_{z}$ and $\alpha(x)\leq\delta_{z}$ for all $x\in f^{-1}(y)$.
Hence, $\rho\big(g_0(x),g(x)\big)<\delta_z$, $x\in f^{-1}(y)$.
Therefore, according to the choice of $V_z$ and $\delta_z$,
$g\in\mathcal{R}_n^f(y,Z,k)$.
\end{proof}

Lemmas 2.1 and 2.3 imply the following proposition.

\begin{pro}\label{gdelta}
Let $H\subset Y$ be a closed set. Then $\mathcal R_n^f(H,Z)$ is a
$G_\delta$-subset of $C(X,M)$ in any of the cases $(i)$ and $(ii)$
from Lemma $2.3$.
\end{pro}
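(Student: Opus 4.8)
The plan is to deduce Proposition \ref{gdelta} directly from the two lemmas already proved, exactly as the sentence ``Lemmas 2.1 and 2.3 imply the following proposition'' suggests. Fix $H\subset Y$ closed, and work in whichever of the cases $(i)$, $(ii)$ we are given; note that in both cases the ambient hypotheses (either $X,Y$ metric with $Z$ closed, or $X,Y$ paracompact with $Z=M$) are precisely those required by Lemma \ref{open}. By Lemma \ref{intersection} we have the set-theoretic identity
\[
\mathcal R_n^f(H,Z)=\bigcap_{k\ge 1}\mathcal R_n^f(H,Z,k),
\]
so it suffices to observe that each factor on the right is open in $C(X,M)$.

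For that I would simply invoke Lemma \ref{open}: since $H$ is closed and we are in case $(i)$ or $(ii)$, each $\mathcal R_n^f(H,Z,k)$ is an open subset of $C(X,M)$. Hence $\mathcal R_n^f(H,Z)$ is a countable intersection of open sets, i.e.\ a $G_\delta$-subset of $C(X,M)$, which is the assertion. I would remark, if desired, that the source limitation topology on $C(X,M)$ is being used throughout, so that ``open'' in the statement of Lemma \ref{open} and in the conclusion here refer to that topology; no Baire-category or density claim is made at this stage, only the topological complexity of the set.

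There is essentially no obstacle here: the proposition is a formal corollary, and the only thing to be careful about is bookkeeping, namely checking that the hypotheses of Proposition \ref{gdelta} (inherited from cases $(i)$ and $(ii)$ of Lemma \ref{open}) are exactly what is needed to apply both Lemma \ref{intersection} (which has no standing hypotheses beyond the blanket setup of the section, in particular holds for arbitrary $H\subset Y$) and Lemma \ref{open} (which needs $H$ closed plus one of the two case hypotheses). Since Lemma \ref{intersection} is stated with no restriction on $H$ and Lemma \ref{open} supplies the openness in precisely the two cases named in the proposition, the two combine with nothing left to prove. Thus the entire argument is: rewrite $\mathcal R_n^f(H,Z)$ as the countable intersection from Lemma \ref{intersection}, note each term is open by Lemma \ref{open}, conclude it is $G_\delta$.
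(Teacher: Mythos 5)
Your argument is exactly the paper's: the proposition is stated there as an immediate consequence of Lemmas 2.1 and 2.3, namely writing $\mathcal R_n^f(H,Z)$ as the countable intersection $\bigcap_{k\ge 1}\mathcal R_n^f(H,Z,k)$ and noting each term is open under the hypotheses of cases $(i)$ or $(ii)$. Your bookkeeping about which lemma needs which hypotheses is correct, so nothing is missing.
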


Next lemma is very useful when dealing with homotopically dense
subsets of function spaces. Here, a set $U\subset C(X,M)$ is said to
be homotopically dense in $C(X,M)$ if for every $g\in C(X,M)$ and
$\e\in C(X,(0,1])$ there exists $g'\in U$ which is $\e$-homotopic to
$g$.

\begin{lem}\cite[Lemma 2.2]{bv}
Let $X$ be a metric space and $G\subset C(X,M)$. Suppose
$\{U(i)\}_{i\geq 1}$ is a sequence of open subsets of $C(X,M)$ such
that
\begin{itemize}
\item for any $h\in G$, $i\geq 1$ and any function $\eta\in
C(X,(0,1])$ there exists $g_i\in B_\rho(h,\eta)\cap U(i)\cap G$
which is $\eta$-homotopic to $h$.
\end{itemize}
Then, for any $g\in G$ and $\e\colon X\to (0,1]$ there exists
$g'\in\bigcap_{i=1}^{\infty} U(i)$ and an $\e$-homotopy connecting
$g$ and $g'$. Moreover, $g'|A=g_0|A$ for some $g_0\in C(X,M)$ and
$A\subset X$ provided $g_i|A=g_0|A$ for all $i$.
\end{lem}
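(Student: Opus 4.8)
The plan is to build $g'$ as the uniform limit of a carefully chosen sequence $g_i \in G$, where at each stage $g_i$ is pushed into one more of the open sets $U(i)$ while staying close to $g_{i-1}$ and $\e$-homotopic to it, with the closeness parameters chosen small enough that (a) the limit exists and is continuous, (b) the limit still lies in every $U(i)$ (because each $U(i)$ is open and $g_j$ is eventually trapped near a point of $U(i)$), and (c) the concatenation of the successive small homotopies converges to a single $\e$-homotopy from $g$ to $g'$. This is the standard Bing-shrinking/back-and-forth device adapted to the source limitation topology.

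First I would fix $g=:g_0\in G$ and a positive continuous $\e\colon X\to(0,1]$, and set up a summable scheme of error functions: choose continuous $\eta_i\colon X\to(0,1]$ with $\sum_{i\ge 1}\eta_i(x) < \e(x)/2$ for all $x$ (e.g. $\eta_i = \e/2^{i+2}$), and shrink them further inductively so that $\eta_{i+1} < \tfrac12 \dist\big(g_i, C(X,M)\setminus U(i)\big)$ in the source-limitation sense — more precisely, so that the ball $B_\rho(g_i,\eta_{i+1})$ is contained in $U(i)$; this is possible because $U(i)$ is open in the source limitation topology and $g_i$ (once constructed) lies in $U(i)$. Then apply the hypothesis with $h=g_{i-1}\in G$, index $i$, and the function $\eta := \min\{\eta_i, \text{the shrinking bound just described for } j<i\}$ to obtain $g_i\in B_\rho(g_{i-1},\eta_i)\cap U(i)\cap G$ together with an $\eta_i$-homotopy $h_i\colon X\times\I\to M$ from $g_{i-1}$ to $g_i$.

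Next I would verify convergence: since $\rho(g_i(x),g_{i-1}(x)) < \eta_i(x)$ and $\sum_i \eta_i$ converges, the sequence $\{g_i(x)\}$ is Cauchy for each $x$, uniformly on the sets where $\e$ is bounded below, so $g'(x):=\lim_i g_i(x)$ exists; continuity of $g'$ follows from local uniform convergence (using continuity of $\e$ and local finiteness arguments, or directly that on each compact piece the convergence is uniform). For membership in $\bigcap_i U(i)$: fix $i$; for all $j\ge i$ we have $\rho(g_j(x),g_i(x)) < \sum_{k>i}\eta_k(x) < \eta_{i+1}(x)$ by the shrinking choice, hence $g'\in \overline{B_\rho(g_i,\eta_{i+1})}$, and by a further factor-of-two margin $g'\in B_\rho(g_i, 2\eta_{i+1}) \subset U(i)$ — this is exactly why I built in the extra halving. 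For the homotopy: reparametrize $h_i$ onto the subinterval $[1-2^{-(i-1)}, 1-2^{-i}]$ of $\I$ and glue; the resulting map $h\colon X\times\I\to M$ is continuous on $X\times[0,1)$, extends continuously at $t=1$ by $h(x,1)=g'(x)$ because the diameters $<\eta_i(x)$ tend to $0$, and $\diam h(\{x\}\times\I) \le \sum_i \eta_i(x) < \e(x)$, so $h$ is the required $\e$-homotopy from $g$ to $g'$. Finally, the moreover clause is immediate: if every $g_i$ agrees with $g_0$ on $A$ (which can be arranged because the hypothesis supplies $g_i$ that may be taken to fix $A$, or because the homotopies can be chosen rel $A$), then so does the limit $g'$.

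The main obstacle is the delicate bookkeeping making the open sets $U(i)$ "stick": one must choose $\eta_{i+1}$ \emph{after} $g_i$ is produced (so the induction interleaves "produce $g_i$" and "shrink $\eta_{i+1}$"), and one must ensure the tail estimate $\sum_{k>i}\eta_k < \eta_{i+1}$ genuinely forces $g'$ to stay inside $U(i)$ rather than merely on its boundary — handled by the extra dyadic safety factor. Everything else (existence of the limit, continuity, assembling the homotopy with diameters controlled by a summable series) is routine once the scheme is fixed; since the lemma is quoted from \cite{bv}, I would in fact just cite it, but the above is the argument I would reconstruct.
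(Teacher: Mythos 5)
Your reconstruction is correct and is exactly the standard inductive/telescoping argument behind this lemma; the paper itself gives no proof here, simply citing \cite[Lemma 2.2]{bv}, and your scheme (interleaving the choice of $g_i$ with the post hoc shrinking of $\eta_{i+1}$ so that a ball around $g_i$ stays inside $U(i)$, plus the summable diameters for the glued homotopy and the observation that the limit preserves agreement on $A$) is the intended one. The only blemish is the inequality $\sum_{k>i}\eta_k(x)<\eta_{i+1}(x)$, which should read $\leq 2\eta_{i+1}(x)$ (with $\eta_{k+1}\leq\eta_k/2$), but your subsequent use of the margin $B_\rho(g_i,2\eta_{i+1})\subset U(i)$ already accounts for this.
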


\begin{cor}
Let $X$ be a metric space and $\{G_i\}_{i\geq 1}$ a sequence of
homotopically dense $G_\delta$-subsets of $C(X,M)$. Then the set
$\bigcap_{i=1}^{\infty}G_i$ is also homotopically dense in $C(X,M)$.
\end{cor}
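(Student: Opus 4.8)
The plan is to reduce the statement to a direct application of Lemma 2.5. First I would invoke the definition of a $G_\delta$-set: for each $i\geq 1$ write $G_i=\bigcap_{j\geq 1}U_{i,j}$, where every $U_{i,j}$ is open in $C(X,M)$. Fixing a bijection $\IN\to\IN\times\IN$, re-enumerate the countable family $\{U_{i,j}:i,j\geq 1\}$ as a single sequence $\{U(k)\}_{k\geq 1}$ of open subsets of $C(X,M)$; then
$$\bigcap_{k\geq 1}U(k)=\bigcap_{i\geq 1}\bigcap_{j\geq 1}U_{i,j}=\bigcap_{i\geq 1}G_i.$$
So it suffices to produce, for every $g\in C(X,M)$ and every $\e\in C(X,(0,1])$, some $g'\in\bigcap_{k\geq 1}U(k)$ together with an $\e$-homotopy joining $g$ and $g'$.

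Next I would verify the hypothesis of Lemma 2.5 with $G=C(X,M)$. Fix $h\in C(X,M)$, $k\geq 1$ and $\eta\in C(X,(0,1])$, and choose $i,j$ with $U(k)=U_{i,j}$. Since $G_i$ is homotopically dense in $C(X,M)$, there is $g_k\in G_i$ which is $\eta$-homotopic to $h$; then $g_k\in G_i\subseteq U_{i,j}=U(k)$ and of course $g_k\in C(X,M)=G$. It remains to observe that $g_k\in B_\rho(h,\eta)$: if $c\colon X\times\I\to M$ is an $\eta$-homotopy connecting $h$ and $g_k$, then for each $x\in X$ the set $c(\{x\}\times\I)$ contains both $h(x)$ and $g_k(x)$ and has diameter $<\eta(x)$, whence $\rho(h(x),g_k(x))<\eta(x)$. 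Thus $g_k\in B_\rho(h,\eta)\cap U(k)\cap G$ and is $\eta$-homotopic to $h$, so all requirements of Lemma 2.5 are met.

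Finally, applying Lemma 2.5 to the sequence $\{U(k)\}_{k\geq 1}$ and to $G=C(X,M)$, we obtain for the given $g$ and $\e$ a map $g'\in\bigcap_{k\geq 1}U(k)=\bigcap_{i\geq 1}G_i$ and an $\e$-homotopy connecting $g$ and $g'$. Hence $\bigcap_{i\geq 1}G_i$ is homotopically dense in $C(X,M)$. I do not expect a real obstacle here; the only points needing a little care are the bookkeeping in the re-enumeration (so that the intersection is literally preserved) and the elementary remark that an $\eta$-homotopy forces $\eta$-closeness of its endpoints, which is precisely what lets us feed the homotopic density of each $G_i$ into the $B_\rho$-condition required by Lemma 2.5.
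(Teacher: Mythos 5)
Your proposal is correct and follows essentially the same route as the paper: decompose each $G_i$ into countably many open sets, observe that homotopic density of $G_i$ yields the approximation hypothesis for each of these open supersets, and apply Lemma 2.5 to the combined sequence with $G=C(X,M)$. The only difference is that you spell out explicitly the (easy) point that an $\eta$-homotopy forces membership in $B_\rho(h,\eta)$, which the paper leaves implicit.
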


\begin{proof}
Each $G_i$ is the intersection of a sequence $\{G_{ij}\}_{j\geq 1}$
of open sets in $C(X,M)$. Since $G_i$ is homotopically dense in
$C(X,M)$, so are all $G_{ij}$, $j\geq 1$. Then we apply Lemma 2.5
for the sequence $\{G_{ij}\}_{i,j\geq 1}$ with $G$ being the whole
space $C(X,M)$.
\end{proof}

We are going to show the local nature of the
$\mathrm{FAP}(n)$-properties.

\begin{thm}\label{local}
A complete metric space $M$ possesses the $\mathrm{FAP}(n)$-property
if and only if every $z\in M$ has a neighborhood $U_z\in
\mathrm{FAP}(n)$.
\end{thm}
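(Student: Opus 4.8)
The plan is to prove both implications, with the forward direction being essentially trivial and the reverse direction carrying all the weight. For the forward implication, if $M\in\mathrm{FAP}(n)$ then every open subset $U_z$ inherits the property: given a map $g\colon\I^m\times\I^n\to U_z$, compose with the inclusion into $M$, apply $\mathrm{FAP}(n)$ in $M$ with a sufficiently small $\epsilon$ chosen so that the $\epsilon$-homotopy stays inside $U_z$ (possible by compactness of $\I^m\times\I^n$ and continuity of $g$, since $g(\I^m\times\I^n)$ is a compact subset of the open set $U_z$, hence at positive distance from $M\setminus U_z$), and restrict back. The resulting map $g'$ has $\dim g'(\{z\}\times\I^n)\le n$ for all $z$, so $U_z\in\mathrm{FAP}(n)$.

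For the reverse implication, assume every point of $M$ has a neighborhood in $\mathrm{FAP}(n)$. Fix $\epsilon>0$, $m\ge 0$, and a map $g\colon\I^m\times\I^n\to M$. The strategy is a standard compactness-plus-partition argument: cover $g(\I^m\times\I^n)$ by finitely many open sets $U_1,\dots,U_p$ each lying in some $\mathrm{FAP}(n)$-neighborhood, use a Lebesgue-number argument to triangulate $\I^m$ finely enough that for each simplex (or small cube) $\sigma$ in a subdivision of $\I^m$, the set $g(\sigma\times\I^n)$ lies in a single $U_{i(\sigma)}$. One then wants to correct $g$ cube-by-cube (or simplex-by-simplex) over $\I^m$, at each stage applying the $\mathrm{FAP}(n)$-property of the relevant $U_i$ to push the fibers $g'(\{z\}\times\I^n)$ down to dimension $\le n$, while keeping the corrections $\epsilon$-small and matching the already-corrected pieces along the common faces.

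The main obstacle — and the step I would spend the most care on — is the amalgamation across the faces of the subdivision of $\I^m$: when I correct $g$ over one cube $\sigma$ using $\mathrm{FAP}(n)$ in $U_{i(\sigma)}$, the new map must agree on $\partial\sigma\times\I^n$ with the corrections already made on adjacent cubes, and those corrections were made inside a possibly different $U_j$. The natural device is to use the parameter $\I^m$ more cleverly: rather than literally gluing, reindex so that the whole of $\I^m$ is handled by a single application of $\mathrm{FAP}(n)$ to an auxiliary map into a suitable $U_i$ after first arranging (via a preliminary $\epsilon/3$-homotopy and the partition of unity / nerve map) that $g$ factors appropriately, or to invoke Lemma 2.5 / Corollary 2.6 with the open sets $U(i)$ being the loci where fibers over a given piece of parameter space have small-mesh order-$\le n$ covers, reducing the global statement to the already-established local approximations. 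Concretely, I expect to set up open sets $\mathcal{R}$-type subsets of $C(\I^m\times\I^n,M)$ indexed by the cubes of successive subdivisions, verify each is open (analogously to Lemma 2.3) and homotopically dense (using the local $\mathrm{FAP}(n)$-property over each small cube, where $g$ maps into a single $\mathrm{FAP}(n)$-neighborhood), and then apply Lemma 2.5 to intersect them and obtain a single $g'$ that is $\epsilon$-homotopic to $g$ with all fibers of dimension $\le n$. Checking homotopical density of each such set is where the local hypothesis is actually consumed, and making the homotopies compatible on overlaps (the "$g'|A=g_0|A$" clause of Lemma 2.5) is the technical heart of the argument.
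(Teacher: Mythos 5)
Your forward implication and your overall framework (open sets, homotopical density, Lemma 2.5 / Corollary 2.6) are fine, but the reverse implication has a genuine gap at exactly the step you yourself flag as ``the technical heart'': you never supply a mechanism that resolves the amalgamation problem, and the devices you float do not work as stated. The root difficulty is that you localize in the \emph{domain}: you subdivide $\I^m$ into small cubes $\sigma$ with $g(\sigma\times\I^n)\subset U_{i(\sigma)}$ and want the sets ``maps whose fibers over points of $\sigma$ have image of dimension $\le n$'' to be homotopically dense. But homotopical density is a statement about \emph{all} maps $h$ in the set $G$ fed to Lemma 2.5, and an arbitrary $h$ need not carry $\sigma\times\I^n$ into any $\mathrm{FAP}(n)$-neighborhood, so the local hypothesis cannot be applied to it; your parenthetical ``where $g$ maps into a single $\mathrm{FAP}(n)$-neighborhood'' covers only the one map $g$. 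Your alternative --- handling all of $\I^m$ ``by a single application of $\mathrm{FAP}(n)$ to an auxiliary map into a suitable $U_i$'' --- is impossible when no single $U_i$ contains $g(\I^m\times\I^n)$. And the rel-boundary correction needed for a literal cube-by-cube gluing is essentially Lemma 3.1 of the paper, which is proved \emph{after} (and using) the present theorem, so it is not available here.

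The paper sidesteps all of this by localizing in the \emph{target} instead. For each $z\in M$ choose $\epsilon_z$ with $\overline{B(z,3\epsilon_z)}\subset U_z$ and consider the relative sets $\mathcal R(z)=\mathcal R_n^\pi\bigl(\I^m,\overline{B(z,\epsilon_z)}\bigr)$, i.e.\ maps $g'$ with $\dim\bigl(g'(\{y\}\times\I^n)\cap\overline{B(z,\epsilon_z)}\bigr)\le n$ for all $y\in\I^m$. Each $\mathcal R(z)$ is a $G_\delta$ by Proposition 2.4, and it is homotopically dense because one only has to modify $g_0$ on $g_0^{-1}\bigl(\overline{B(z,2\epsilon_z)}\bigr)$, which is enclosed in a finite union $K$ of products $A_i\times B_i$ inside $g_0^{-1}\bigl(B(z,3\epsilon_z)\bigr)$; all of $K$ maps into the single neighborhood $U_z$, so Corollary 2.6 applied to the pullbacks of the sets $\mathcal R_i\subset C(K_i,U_z)$ produces one map on $K$ that is good on every $K_i$ simultaneously --- no face-matching is ever required --- and the Homotopy Extension Theorem globalizes it. Finally, covering $g(\I^m\times\I^n)$ by finitely many balls $B(z_i,\epsilon_{z_i}/2)$, intersecting the corresponding $\mathcal R(z_i)$ (Corollary 2.6 again), and applying the finite closed sum theorem for covering dimension to $g'(\{y\}\times\I^n)=\bigcup_i g'(\{y\}\times\I^n)\cap\overline{B(z_i,\epsilon_{z_i})}$ gives $\dim g'(\{y\}\times\I^n)\le n$. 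This target-side localization together with the sum theorem is the idea missing from your proposal.
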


\begin{proof}
It is easily seen that if $M\in \mathrm{FAP}(n)$, then every open
set $U\subset M$ also has the $\mathrm{FAP}(n)$-property. Suppose
every $z\in M$ has an open neighborhood $U_z\in \mathrm{FAP}(n)$.
Fix an integer $m\geq 0$ and consider the projection $\pi\colon
\I^m\times\I^n\to\I^m$. We need to prove that the set $\mathcal
R_n^\pi(\I^m,M)$ is homotopically dense in $C(\I^m\times\I^n,M)$. To
this end, using an idea from the proof of \cite[Theorem 3.6]{mv},
for every $z\in M$ choose a positive $\epsilon_z$ such that $U_z$
contains the closed ball $\overline{B(z,3\epsilon_z)}$ with center
$z$ and radius $3\epsilon_z$. Following the notations from the
beginning of this section (with $X$ replaced by $\I^m\times\I^n$ and
$Y$ by $\I^m$), we consider the sets $\mathcal R(z)=\mathcal
R_n^\pi(\I^m,\overline{B(z,\epsilon_z)})$, $z\in M$.

\textit{Claim $1$. Every $\mathcal{R}(z)$, $z\in M$, is a
homotopically dense $G_\delta$-subset of $C(\I^m\times\I^n,M)$.}

All $\mathcal{R}(z)$ are $G_\delta$-subsets of $C(\I^m\times\I^n,M)$
by Proposition 2.4. To show their homotopical density in
$C(\I^m\times\I^n,M)$, fix $z_0\in M$, $g_0\in C(\I^m\times\I^n,M)$
and $\epsilon>0$ with $\epsilon<\epsilon_{z_0}$. Let
$A_{z_0}=g_0^{-1}\big(\overline{B(z_0,2\epsilon_{z_0})}\big)$ and
$W_{z_0}=g_0^{-1}\big(B(z_0,3\epsilon_{z_0})\big)$. Choose finitely
many sets $K_i=A_i\times B_i$, $i=1,2,..,k$, such that
$A_i\subset\I^m$ and $B_i\subset\I^n$ are homeomorphic to $\I^m$ and
$\I^n$, respectively, and $A_{z_0}\subset
K=\bigcup_{i=1}^{i=k}K_i\subset W_{z_0}$. We can also suppose that
there exists a polyhedron $L$ such that $A_{z_0}\subset L\subset K$.
For every $i$ consider the set $$\mathcal R_i=\{h\in
C(K_i,U_{z_0}):\dim h(\{y\}\times B_i)\leq n\hbox{~}\forall y\in
A_i\hbox{~}\}\leqno{(2)}$$ and let $p_i\colon C(K,U_{z_0})\to
C(K_i,U_{z_0})$ be the restriction map $g\rightarrow g|K_i$, $g\in
C(K,U_{z_0})$. Obviously, $p_i$ are continuous. By Proposition 2.4,
each $\mathcal R_i$ is a $G_\delta$-subset of $C(K_i,U_{z_0})$.
Hence, all $p_i^{-1}(\mathcal R_i)$  are $G_\delta$-subsets of
$C(K,U_{z_0})$. Moreover, each $\mathcal R_i$ is homotopically dense
in $C(K_i,U_{z_0})$  because $U_{z_0}\in \mathrm{FAP}(n)$. This,
according to the Homotopy Extension Theorem, implies that
$p_i^{-1}(\mathcal R_i)$ are also homotopically dense in
$C(K,U_{z_0})$. So, by Corollary 2.6, $\mathcal
H=\bigcap_{i=1}^{i=p}p_i^{-1}(\mathcal R_i)$ is homotopically dense
in $C(K,U_{z_0})$. Then there exists a map $h\in\mathcal H$ which is
$\epsilon$-homotopic to $g_0|K$. Applying again the Homotopy
Extension Theorem for the maps $h|L$ and $g_0$, we obtain a map
$g^*\in C(\I^m\times\I^n,M)$ such that $g^*|L=h|L$ and $g^*$ is
$\epsilon$-homotopic to $g_0$. Let us show that
$g^*\in\mathcal{R}(z_0)$, or equivalently, $\dim
g^*(\{y\}\times\I^n)\cap\overline{B(z_0,\epsilon_{z_0})}\leq n$ for
every $y\in\I^m$. It is easily seen that $(g^*)^{-1}(z)\subset
A_{z_0}$ for every $z\in\overline{B(z_0,\epsilon_{z_0})}$. The last
inclusion yields that
$g^*(\{y\}\times\I^n)\cap\overline{B(z_0,\epsilon_{z_0})}\subset
h((\{y\}\times\I^n)\cap A_{z_0})$ for any $y\in\pi(A_{z_0})$ and
$g^*(\{y\}\times\I^n)\cap\overline{B(z_0,\epsilon_{z_0})}=\varnothing$
if $y\not\in\pi(A_{z_0})$. Therefore, the proof of the claim is
reduced to show that $\dim h((\{y\}\times\I^n)\cap A_{z_0})\leq n$
for any $y\in\pi(A_{z_0})$. And this is really true. Indeed, for any
such $y$ let $\Lambda(y)=\{i\leq k:y\in A_i\}$. Then
$(\{y\}\times\I^n)\cap A_{z_0}=\bigcup_{i\in\Lambda(y)}(\{y\}\times
B_i)\cap A_{z_0}$. Since $h|K_i\in\mathcal R_i$, by $(2)$ we have
$\dim h((\{y\}\times B_i)\cap A_{z_0})\leq n$ for every
$i\in\Lambda(y)$. Hence, $h((\{y\}\times\I^n)\cap A_{z_0})$ is the
union of its closed sets $h((\{y\}\times B_i)\cap A_{z_0})$,
$i\in\Lambda(y)$, each of dimension $\leq n$. So, $\dim
h((\{y\}\times\I^n)\cap A_{z_0})\leq n$ which completes the proof of
the claim.

Now, we can show that $\mathcal R_n^\pi(\I^m,M)$ is homotopically
dense in $C(\I^m\times\I^n,M)$. To this end, fix $g\in C(X,M)$ and
$\eta>0$, and choose finitely many points $z_i\in M$, $i=1,..,q$,
such that
$g(\I^m\times\I^n)\subset\bigcup_{i=1}^{i=q}B(z_i,\epsilon_{z_i}/2)$.
Let $\delta=\min\{\eta,\epsilon_{z_i}/2:i\leq q\}$. By the above
claim,  each $\mathcal{R}(z_i)$ is a homotopically dense
$G_\delta$-subset of $C(\I^m\times\I^n,M)$. Therefore, so is the set
$\bigcap_{i\leq q}\mathcal{R}(z_i)$ according to Corollary 2.6.
Hence, there exists $g'\in\bigcap_{i\leq q}\mathcal{R}(z_i)$ which
is $\delta$-homotopic to $g$. It is easily seen that
$g'(\I^m\times\I^n)\subset\bigcup_{i=1}^{i=q}B(z_i,\epsilon_{z_i})$,
so $g'(\{y\}\times\I^n)\subset\bigcup_{i\leq
q}g'(\{y\}\times\I^n)\cap\overline{B(z_i,\epsilon_{z_i})}$ for any
$y\in\I^m$. Observe that each set
$g'(\{y\}\times\I^n)\cap\overline{B(z_i,\epsilon_{z_i})}$, $i\leq
q$, is of dimension $\leq n$ because $g'\in\mathcal{R}(z_i)$. Hence,
$\dim g'(\{y\}\times\I^n)\leq n$ for all $y\in\I^m$. Thus,
$g'\in\mathcal R_n^\pi(\I^m,M)$. This completes the proof.
\end{proof}

Next proposition shows that in the definition of
$\mathrm{FAP}(n)$-spaces we can consider any product
$\I^m\times\I^k$, $m\geq 0$ and $k\leq n$.

\begin{pro}
If a metrizable space $M$ has the $\mathrm{FAP}(n)$-property, then
any map $g\colon\I^m\times\I^k\to M$, where $m\geq 0$ and $k\leq n$,
can be approximated by a map $g'\colon\I^m\times\I^k\to M$ such that
$\dim g'(\{z\}\times\I^k)\leq n$ for all $z\in\I^m$.
\end{pro}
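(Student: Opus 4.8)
The plan is to deduce the statement directly from the defining property of $\mathrm{FAP}(n)$ by enlarging the fibre from $\I^k$ to $\I^n$ through a retraction. Identify $\I^n$ with $\I^k\times\I^{n-k}$, let $F=\I^k\times\{0\}\subset\I^n$ be the corresponding face, and let $r\colon\I^n\to F$, $r(x,y)=(x,0)$, be the natural retraction, which restricts to the identity on $F$. Given $\epsilon>0$ and $g\colon\I^m\times\I^k\to M$, I would first identify $F$ with $\I^k$ and set $\tilde g=g\circ(\id_{\I^m}\times r)\colon\I^m\times\I^n\to M$, so that $\tilde g$ agrees with $g$ on $\I^m\times F$.

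Next, apply the hypothesis $M\in\mathrm{FAP}(n)$ to $\tilde g$: there is a map $\tilde g'\colon\I^m\times\I^n\to M$ which is $\epsilon$-homotopic to $\tilde g$ via a homotopy $h\colon(\I^m\times\I^n)\times\I\to M$ all of whose tracks $h(\{w\}\times\I)$ have diameter $<\epsilon$, and with $\dim\tilde g'(\{z\}\times\I^n)\leq n$ for every $z\in\I^m$. I would then take $g'$ to be the restriction of $\tilde g'$ to $\I^m\times F$, regarded as a map $\I^m\times\I^k\to M$.

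It remains to verify the two required conclusions. Restricting $h$ to $(\I^m\times F)\times\I$ yields a homotopy between $\tilde g|_{\I^m\times F}=g$ and $\tilde g'|_{\I^m\times F}=g'$ whose tracks still have diameter $<\epsilon$; hence $g'$ is $\epsilon$-homotopic to $g$ (and in particular $\epsilon$-close to it). For the fibre dimension, for each $z\in\I^m$ one has $g'(\{z\}\times\I^k)=\tilde g'(\{z\}\times F)\subseteq\tilde g'(\{z\}\times\I^n)$, and the latter is a compact metric space of dimension $\leq n$; since covering dimension is monotone with respect to (closed, hence all) subspaces of compact metric spaces, $\dim g'(\{z\}\times\I^k)\leq n$.

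I do not expect a genuine obstacle here; the only points requiring a little care are the bookkeeping of the identifications $\I^n\cong\I^k\times\I^{n-k}$ and $F\cong\I^k$ (so that $\tilde g$ really does extend $g$), and the routine appeal to monotonicity of $\dim$ on subspaces of metrizable, in fact compact metric, spaces.
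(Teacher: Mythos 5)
Your proposal is correct and is essentially the paper's own argument: the paper also precomposes $g$ with $\id_{\I^m}\times r$ for a retraction $r\colon\I^n\to\I^k$, applies the $\mathrm{FAP}(n)$ property to the resulting map on $\I^m\times\I^n$, and restricts back to $\I^m\times\I^k$. Your explicit verification of the homotopy tracks and the appeal to monotonicity of $\dim$ on subspaces of compact metric spaces just spells out steps the paper leaves implicit.
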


\begin{proof}
Suppose $M$ has the $\mathrm{FAP}(n)$-property. Let $\epsilon>0$ and
$g\colon\I^m\times\I^k\to M$ with $k\leq m$. Take a retraction
$r\colon\I^n\to\I^k$ and consider the maps
$\pi_1\colon\I^m\times\I^n\to\I^m\times\I^k$ and
$\pi_2\colon\I^m\times\I^k\to\I^m$ defined, respectively, by
$\pi_1((z,x))=(z,r(x))$ and $\pi_2(z,y)=z$. Then
$\pi=\pi_2\circ\pi_1\colon\I^m\times\I^n\to\I^m$ is the natural
projection. Since $M\in \mathrm{FAP}(n)$, there exists $h\in
C(\I^m\times\I^n, M)$ which is $\epsilon$-homotopic to the map
$g\circ\pi_1$ and $\dim h(\{z\}\times\I^n)\leq n$ for all
$z\in\I^m$. Consequently, the map $g'=h|(\I^m\times\I^k)$ is
$\epsilon$-homotopic to $g$ and $\dim g'(\{z\}\times\I^k)\leq n$,
$z\in\I^m$.
\end{proof}

Next theorem provides a characterization of $\mathrm{FAP}(n)$-spaces
in terms of simplicial maps.

\begin{thm}
For a complete metric space $M$ the following conditions are
equivalent:
\begin{enumerate}
\item[(i)] $M$ possesses the $\mathrm{FAP}(n)$-property;
\item[(ii)] If $p\colon K\to L$ is at most $n$-dimensional simplicial map
 between finite simplicial complexes, then the set
$\mathcal R_n^p(L,M)$ is homotopically dense in $C(K,M)$;
\end{enumerate}
\end{thm}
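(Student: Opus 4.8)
The plan is to prove the two implications separately, using the tools already developed in Section 2.

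\medskip

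For the direction (i)$\Rightarrow$(ii), assume $M\in\mathrm{FAP}(n)$ and let $p\colon K\to L$ be a simplicial map between finite complexes with $\dim p\le n$. I would first reduce to a local picture over $L$: since $L$ is finite, it suffices to work simplex by simplex. Over each (closed) simplex $\sigma$ of $L$, the preimage $p^{-1}(\sigma)$ is a subcomplex of $K$, and the restriction $p\colon p^{-1}(\sigma)\to\sigma$ is a simplicial map of dimension $\le n$ onto a cube-like set $\sigma\cong\I^{\dim\sigma}$. The key geometric point is that such a map is, up to homeomorphism over $\sigma$, a ``trivial'' fibration: because $p$ has dimension $\le n$ and $\sigma$ is a simplex, one can find a homeomorphism of $p^{-1}(\sigma)$ onto a subspace of $\sigma\times\I^n$ commuting with the projection to $\sigma$, so that each fiber $p^{-1}(t)$ sits inside $\{t\}\times\I^n$. (More carefully: $p^{-1}(\sigma)$ can be embedded in $\sigma\times\I^N$ for large $N$ over $\sigma$, and since the fibers have dimension $\le n$ and $\sigma$ is contractible, a general-position argument pushes the image into $\sigma\times\I^n$.) Granting this, a map $g\in C(K,M)$ restricted to $p^{-1}(\sigma)$ extends to a neighborhood in $\sigma\times\I^n$, hence factors (after composing with the embedding) through $\I^m\times\I^n$ with $m=\dim\sigma$; applying the $\mathrm{FAP}(n)$ property of $M$ yields an $\e$-homotopic correction whose fibers $\{t\}\times\I^n$ map to sets of dimension $\le n$, so the corrected map lies in $\mathcal R_n^p(\sigma,M)$. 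One then uses the Homotopy Extension Theorem to glue these corrections over the skeleta of $L$ (from top-dimensional simplices down, or inductively over skeleta), exactly as in the proof of Theorem~2.7: at each stage one intersects the open dense sets $\mathcal R_n^p(\tau,M)$ (open by Lemma~2.3(ii), $G_\delta$ by Proposition~2.4) and invokes Corollary~2.6 to keep homotopical density. The final outcome is that $\mathcal R_n^p(L,M)=\bigcap_{\sigma}\mathcal R_n^p(\sigma,M)$ is homotopically dense in $C(K,M)$.

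\medskip

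For the direction (ii)$\Rightarrow$(i), assume (ii) and fix $\e>0$, $m\ge 0$ and $g\colon\I^m\times\I^n\to M$; I want to approximate $g$ by a map whose fibers over $\I^m$ have dimension $\le n$. The idea is to realize the projection $\pi\colon\I^m\times\I^n\to\I^m$ as (a map close to) a simplicial map between finite complexes of dimension $\le n$. Triangulate $\I^m$ finely as a complex $L$, and $\I^m\times\I^n$ as a complex $K$ refining the product triangulation, chosen fine enough that $\pi$ is simplicially approximable by a simplicial map $p\colon K\to L$ with $\pi$ and $|p|$ being $\e$-close and $\e$-homotopic; since every fiber $\pi^{-1}(z)\cong\I^n$ has dimension $n$, by taking the mesh small we may arrange $\dim p\le n$. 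Then (ii) gives a homotopically dense set $\mathcal R_n^p(L,M)\subset C(K,M)$; pick $g''\in\mathcal R_n^p(L,M)$ which is $\e$-homotopic to $g\circ(\text{simplicial homeomorphism } \I^m\times\I^n\to K)$. Composing back and using that $p$ is $\e$-close to $\pi$, one checks that the resulting $g'\colon\I^m\times\I^n\to M$ is $c\e$-homotopic to $g$ and, because $p^{-1}(p(z))$ contains $\pi^{-1}(z)$ up to a controlled perturbation, $\dim g'(\{z\}\times\I^n)\le n$ for all $z$. Rescaling $\e$ finishes the proof.

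\medskip

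I expect the main obstacle to be the geometric reduction in the forward direction: making precise the claim that an $n$-dimensional simplicial map over a simplex (or over a skeleton) embeds fiberwise into a product with $\I^n$, so that one can actually feed the restricted map into the $\mathrm{FAP}(n)$ hypothesis, and then patching these local fiberwise corrections together consistently via the Homotopy Extension Theorem without disturbing what was achieved on lower skeleta. The bookkeeping is entirely analogous to the proof of Theorem~2.7 (the sets $K_i=A_i\times B_i$ there play the role of the simplices here), but verifying the dimension estimate $\dim p\le n$ is inherited correctly on each piece, and that the glued map's fibers decompose as finite unions of closed pieces each of dimension $\le n$ (so that the countable-sum / finite-sum theorem for dimension applies), is where the care is needed.
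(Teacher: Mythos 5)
Your reduction in (i)$\Rightarrow$(ii) contains a step that is false in general. You decompose over the simplices $\sigma$ of $L$ and claim that, since $\dim p\le n$, the whole preimage $p^{-1}(\sigma)$ admits a fiberwise embedding into $\sigma\times\I^n$ over $\sigma$ (``a general-position argument pushes the image into $\sigma\times\I^n$''). General position only places an $n$-dimensional compactum into $\I^{2n+1}$, not into $\I^n$: already for $L$ a point and $K=K_5$ (the complete graph on five vertices, a $1$-dimensional finite complex), the map $p\colon K\to L$ is simplicial of dimension $1$, yet the fiber $K_5$ embeds neither in $\I^1$ nor even in $\IR^2$, so no fiberwise embedding of $p^{-1}(\sigma)$ into $\sigma\times\I^1$ exists. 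Since the $\mathrm{FAP}(n)$ hypothesis is available only for maps defined on $\I^m\times\I^n$ (or $\I^m\times\I^k$ with $k\le n$, via Proposition 2.8), without such an embedding you cannot feed the restricted map into the hypothesis, and your local correction step collapses. The paper avoids this by decomposing over the simplices $K_i$ of $K$ rather than over those of $L$: on a single simplex $K_i$ the fibers $p^{-1}(z)\cap K_i$ are convex, one assigns the vertices of $K_i$ to the vertices of a $k$-simplex $\sigma_k$ injectively on each vertex-fiber, and the induced affine map $e$ makes $p\triangle e$ a fiberwise embedding of $K_i$ into $p(K_i)\times\sigma_k$; Proposition 2.8 then applies. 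The sets $\mathcal H_i=\{g:\dim g(p^{-1}(z)\cap K_i)\le n\ \forall z\in p(K_i)\}$ are each homotopically dense $G_\delta$'s, $\mathcal R_n^p(L,M)=\bigcap_i\mathcal H_i$ by the finite sum theorem for dimension, and Corollary 2.6 finishes.

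Your converse direction is also shakier than needed: if $p$ is merely a simplicial approximation of the projection $\pi$, then $\{z\}\times\I^n$ is not a fiber of $p$, and knowing $\dim g'(p^{-1}(w))\le n$ for all $w$ does not yield $\dim g'(\{z\}\times\I^n)\le n$, because $\{z\}\times\I^n$ is an uncountable union of pieces of fibers of $p$ and no sum theorem applies. The fix is the paper's one-line observation that $\pi$ itself is already simplicial with respect to suitable triangulations of $\I^m\times\I^n$ and $\I^m$, so no approximation of $\pi$ is required.
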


\begin{proof}
$(i)\Rightarrow (ii)$ Suppose $M\in \mathrm{FAP}(n)$ and $p\colon
K\to L$ a simplicial map between finite simplicial complexes with
$\dim p=k\leq n$. Let $K^{(0)}$, $L^{(0)}$ be the set of vertices of
$K$ and $L$, respectively, and fix $g_0\in C(K,M)$ and $\epsilon>0$.

First, we assume that $K$ is a simplex. Then $L$ is also a simplex
 and, since $\dim p=k$, $p^{-1}(z)\cap K^{(0)}$ contains at most $k+1$
points for every vertex $z\in L^{(0)}$. Consequently, we can find a
map $e^{(0)}\colon K^{(0)}\to\sigma_k^{(0)}$ which is injective on
each set $p^{-1}(z)\cap K^{(0)}$, $z\in L^{(0)}$. Here, $\sigma_k$
is a $k$-dimensional simplex. This map induces an affine map
$e\colon K \to\sigma_k$. Then the diagonal map $h=p\triangle e\colon
K\to L\times\sigma_k$ is an affine embedding. So, there exists a
retraction $r\colon L\times\sigma_k\to K$ such that $h\circ r$ is
the identity on $h(K)$. Consider the projection $\pi\colon \colon
L\times\sigma_k\to L$. By Proposition 2.8, there exists a map
$\overline{g}\colon L\times\sigma_k\to M$ $\epsilon$-homotopic to
$g_0\circ r$ such that $\dim\overline{g}(\{z\}\times\sigma_k)\leq n$
for every $z\in L$. Then for the map $g'=\overline{g}\circ h$ we
have $\dim g'(p^{-1}(z))\leq n$ because $h(p^{-1}(z))$ is
homeomorphic to a subset of $\{z\}\times\sigma_k$. Moreover, it
follows that $g'$ is $\epsilon$-homotopic to $g_0$. Therefore, in
this case $\mathcal R_n^p(L,M)$ is homotopically dense in $C(K,M)$.

Now, we can prove the general case. Let $\{K_i:i\leq s\}$ be all
simplexes of $K$ and for each $i\leq s$ we denote $$\mathcal
H_i=\{g\in C(K,M):\dim g(p^{-1}(z)\cap K_i)\leq
n\hbox{~}\forall\hbox{~}z\in p(K_i)\}.$$ According to Proposition
2.4, $\mathcal H_i$ are $G_\delta$ in $C(K,M)$. It is easily seen
that $\mathcal R_n^p(L,M)=\bigcap_{i=1}^{i=s}\mathcal H_i$. So, by
Corollary 2.6, it suffices to show that each $\mathcal H_i$ is
homotopically dense in $C(K,M)$. Using the previous case, each set
$\mathcal K_i=\{g\in C(K_i,M):\dim g(p^{-1}(z)\cap K_i)\leq
n\hbox{~}\forall\hbox{~}z\in p(K_i)\}$ is homotopically dense in
$C(K_i,M)$. Therefore, there exists a map $g_i\in\mathcal K_i$ which
is $\epsilon$-homotopic to $g_0|K_i$. Then, by the Homotopy
Extension Theorem, $g_i$ can be extended to a map $\overline{g}_i\in
C(K,M)$ $\epsilon$-homotopic to $g_0$. Obviously,
$\overline{g}_i\in\mathcal H_i$. So, each $\mathcal H_i$ is
homotopically dense in $C(K,M)$ which completes the proof.

$(ii)\Rightarrow (i)$ This implication is trivial because any
projection $\pi\colon\I^m\times\I^n\to\I^m$ is a simplicial map with
respect to suitable triangulations of $\I^m$ and $\I^m\times\I^n$.
\end{proof}

\section{Proof of Theorem 1.1 and Corollaries 1.2 - 1.3}

In this section, following the notations from Section 2,  we assume
that $(M,\rho)$ is a completely metrizable $\mathrm{FAP}(n)$-space.
As we already observed, every $g\in C(X,M)$ is simplicially
factorizable provided $M$ is an $\mathrm{ANR}$. Moreover, if
$f\colon X\to Y$ is a perfect map between paracompact spaces, then
$\dim f=\Ddim (f)$ when either $\dim f=0$ or $Y$ is a $C$-space
\cite{bv}. Let us also note that every $\mathrm{ANR}$ has the
$\mathrm{FAP}(0)$-property.  Hence, the proofs of Corollaries 1.2
and 1.3 follow from Theorem 1.1.

By Proposition 2.4, $\mathcal R_n^f(Y,M)$ is a $G_\delta$-subset of
$C(X,M)$. So, to prove Theorem 1.1 it suffices to show that any
simplicially factorizable map in $C(X,M)$ can be approximated by
maps from $\mathcal R_n^f(Y,M)$. This will be done in Proposition
3.3 below.

Recall that a map $p\colon K\to L$ between two simplicial complexes
is a $PL$-map if $p(\sigma)$ is contained in a simplex of $L$ and
$p$ is linear on $\sigma$ for every simplex $\sigma\in K$.

\begin{lem}\label{compact}
Let $p\colon K\to\sigma$ be a $PL$-map between a finite simplicial
complex $K$ and a simplex $\sigma$ with $\dim p\leq n$. Suppose
$g_0\in C(K,M)$ such that $\dim g_0(p^{-1}(y))\leq n$ for all
$y\in\partial\sigma$, where $\partial\sigma$ is the boundary of
$\sigma$. Then, for every $\epsilon>0$ there exists a map
$g\in\mathcal R_n^p(\sigma,M)$ which is $\epsilon$-homotopic to
$g_0$ and $g|p^{-1}(\partial\sigma)=g_0|p^{-1}(\partial\sigma)$.
\end{lem}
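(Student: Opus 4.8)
The plan is to reduce the lemma to the absolute case already handled in Theorem~2.9 (equivalently, to the defining property of $\mathrm{FAP}(n)$ via Proposition~2.8), using the product-like structure of the map $p$ over the top-dimensional simplex $\sigma$ together with the Homotopy Extension Theorem to keep the map unchanged on $p^{-1}(\partial\sigma)$. Concretely, $\sigma$ is a simplex, so it is PL-homeomorphic to $\I^m\times[0,1]$ with $\I^m$ a face and $\I^m\times\{0\}$ corresponding to a collar of $\partial\sigma$; more useful is to triangulate $\sigma$ and $K$ finely enough that $p$ becomes simplicial, and then apply the subdivision freely. Since $\dim p\leq n$, over each simplex $\tau$ of (a subdivision of) $\sigma$ the preimage $p^{-1}(\tau)$ is a finite complex whose fibers have dimension $\leq n$; by the argument in the simplex case of Theorem~2.9 (building an affine embedding $h=p\triangle e\colon p^{-1}(\tau)\to\tau\times\sigma_k$ with $e$ injective on fibers over vertices, a retraction $r$ of $\tau\times\sigma_k$ onto the image, and applying Proposition~2.8 over $\tau$), the set of maps $h\in C(p^{-1}(\tau),M)$ with $\dim h(p^{-1}(y))\leq n$ for all $y\in\tau$ is homotopically dense in $C(p^{-1}(\tau),M)$.

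Next I would assemble these local approximations. Write $\mathcal R = \mathcal R_n^p(\sigma,M)$, and observe (as in Theorem~2.9) that $\mathcal R = \bigcap_i \mathcal H_i$ where $\mathcal H_i = \{g\in C(K,M):\dim g(p^{-1}(z)\cap K_i)\leq n \ \forall z\in p(K_i)\}$ ranges over the simplexes $K_i$ of $K$; each $\mathcal H_i$ is $G_\delta$ by Proposition~2.4, and by the previous paragraph plus the Homotopy Extension Theorem each $\mathcal H_i$ is homotopically dense in $C(K,M)$, hence so is $\mathcal R$ by Corollary~2.6. This already gives a map $\epsilon$-homotopic to $g_0$ lying in $\mathcal R$; the remaining issue is the constraint on $p^{-1}(\partial\sigma)$.

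To achieve $g|p^{-1}(\partial\sigma)=g_0|p^{-1}(\partial\sigma)$, I would exploit the hypothesis that $g_0$ already satisfies $\dim g_0(p^{-1}(y))\leq n$ for $y\in\partial\sigma$: thus $g_0|p^{-1}(\partial\sigma)$ already lies in the analogue of $\mathcal R$ over $\partial\sigma$, and there is nothing to correct there. I would set up the approximation rel $p^{-1}(\partial\sigma)$: triangulate so that $\partial\sigma$ is a subcomplex, let $A=p^{-1}(\partial\sigma)$, and run the Corollary~2.6 / Lemma~2.5 machinery with the ``moreover'' clause of Lemma~2.5 (which permits fixing the approximants on a prescribed set $A$, provided they all agree with a common $g_0$ on $A$). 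Concretely, when building $g_i\in\mathcal K_i$ homotopically close to $g_0|K_i$ via the simplex case and then extending by the Homotopy Extension Theorem, I would perform the extension so that the homotopy is constant on $A\cap K_i$ (possible because $g_0|A$ is already ``good'', so no modification is needed near $A$); then all approximants agree with $g_0$ on $A$, and Lemma~2.5 yields the final $g$ with $g|A=g_0|A$.

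The main obstacle will be the bookkeeping in the last step: ensuring that the local corrections over the top simplices can genuinely be taken constant on a neighborhood of $p^{-1}(\partial\sigma)$ in $K$, so that the Homotopy Extension Theorem delivers a homotopy fixing $p^{-1}(\partial\sigma)$ pointwise rather than merely up to small motion. This requires choosing the collar/subdivision so that the simplexes $K_i$ meeting $p^{-1}(\partial\sigma)$ are handled by restriction maps into which $g_0$ already maps into the ``good'' set, and invoking the relative form of homotopical density implicit in the ``moreover'' clause of Lemma~2.5 uniformly across all $K_i$; everything else is a routine reprise of the proof of Theorem~2.9.
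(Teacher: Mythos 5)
Your first two paragraphs just reprove Theorem~2.9 (homotopical density of $\mathcal R_n^p(\sigma,M)$ in $C(K,M)$ with no boundary constraint), which is fine but is not where the content of this lemma lies. The whole difficulty is the relative condition $g|p^{-1}(\partial\sigma)=g_0|p^{-1}(\partial\sigma)$, and your proposed resolution of it has a genuine gap. You argue that since $g_0$ is already ``good'' over $\partial\sigma$, the local corrections can be taken constant on (a neighborhood of) $A=p^{-1}(\partial\sigma)$, so that ``no modification is needed near $A$.'' But the hypothesis $\dim g_0(p^{-1}(y))\leq n$ is given only for $y\in\partial\sigma$; it says nothing about fibers over points $y$ near but off $\partial\sigma$, and the fiberwise dimension condition is not open in $y$. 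If your correction is the identity on a neighborhood of $A$ in $K$, then over the corresponding collar of $\partial\sigma$ (minus $\partial\sigma$ itself) the final map is still $g_0$, and there is no reason for $\dim g_0(p^{-1}(y))\leq n$ to hold there. If instead the correction is constant only on $A$ itself, you must control what happens at points arbitrarily close to $\partial\sigma$, where the map is an uncontrolled interpolation produced by the Homotopy Extension Theorem --- and membership in $\mathcal R_n^p(y,M)$ is not preserved by small perturbations, so closeness alone does not save you.

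The paper's proof supplies exactly the missing mechanism: it does not try to land in $\mathcal R_n^p(\sigma,M)$ rel boundary in one step, but works with the open sets $U(k)=\mathcal R_n^p(\sigma,M,k)$ whose intersection is $\mathcal R_n^p(\sigma,M)$ (Lemmas~2.1 and~2.3). Membership in $\mathcal R_n^p(y,M,k)$ \emph{is} stable, both under small perturbations of the map and under moving $y$ (Lemma~2.2): since $g$ agrees with $g_0$ on $\Omega=p^{-1}(\partial\sigma)$, each $y\in\partial\sigma$ gets a neighborhood $V_y$ and a margin $\delta_y>0$ within which the $U(k)$-condition persists. One then corrects the map only over the compact set $F=\sigma\setminus V$ (via Theorem~2.9), keeps $g$ on $\Omega$, glues with the Homotopy Extension Theorem by an $\eta$-small homotopy with $\eta\leq\min_i\delta_{y_i}$, and checks membership in $U(k)$ separately over $V$ (by stability) and over $F$ (by the correction). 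Since $V$ and $\eta$ depend on $k$ and shrink as $k\to\infty$, the final passage to $\bigcap_k U(k)$ requires the limit machinery of Lemma~2.5, whose ``moreover'' clause preserves the boundary values. Without this quantitative, $k$-by-$k$ structure your argument does not close.
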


\begin{proof}
We may assume that $p$ is simplicial  because any $PL$-map between
finite simplicial complexes is simplicial with respect to some
triangulations of the complexes. Let $\Omega=p^{-1}(\partial\sigma)$
and $G=\{g\in C(K,M):g|\Omega=g_0|\Omega\}$. All sets $U(k)=\mathcal
R_n^p(\sigma,M,k)$, $k\geq 1$, are open in $C(K,M)$ and their
intersection is $\mathcal R_n^p(\sigma,M)$, see Lemmas 2.1 and 2.3.
So, by Lemma 2.5, it suffices to show that each $U(k)$ has the
following property: any $g\in G$ can be homotopically approximated
by maps from $U(k)\cap G$.

So, fix $g\in G$, $k\geq 1$ and $\delta>0$. We are going to find
$h\in U(k)\cap G$ which is $\delta$-homotopic to $g$. Since
$g|\Omega=g_0|\Omega$, $g\in\mathcal R_n^p(y,M,k)$ for every
$y\in\partial\sigma$. Consequently, each $y\in\partial\sigma$ has a
neighborhood $V_y$ in $\sigma$ with corresponding $\delta_y>0$ both
satisfying the hypotheses of Lemma~\ref{nbd}. Choose finitely many
$y_i\in\partial\sigma$, $i\leq s$, such that $\displaystyle
V=\bigcup_{i\leq s}V_{y_i}$ covers $\partial\sigma$. Let
$F=\sigma\backslash V$ and
$\displaystyle\eta=\min\{\delta,\delta_{y_i}:i\leq s\}$.  We
consider such a triangulation $T$ of $\sigma$ that the complex
$L=\{\tau\in T:\tau\cap F\neq\varnothing\}$ is disjoint with
$\partial\sigma$. Because $K$ and $\sigma$ are finite complexes,
both they have triangulations $T_K$ and $T_\sigma$ such that
$T_\sigma$ is a subdivision of $T$ and $p$ remains simplicial with
respect to $T_K$ and $T_\sigma$. So, we can apply Theorem 2.9 to
find a map $g_1\in \mathcal R_n^p(\sigma,M)$ which is
$\eta$-homotopic to $g$. Then the map $g_2\colon \Omega\cup
p^{-1}(L)\to M$, $g_2|\Omega=g|\Omega$ and
$g_2|p^{-1}(L)=g_1|p^{-1}(L)$, is $\eta$-homotopic to $g|\Omega\cup
p^{-1}(L)$. Since $\Omega\cup p^{-1}(L)$ is a subcomplex of $K$, by
the Homotopy Extension Theorem, $g_2$ can be extended to a map
$h\colon K\to M$ which is $\eta$-homotopic to $g$. We have
$h\in\mathcal R_n^p(y,M,k)$ for all $y\in\sigma$. Indeed, this
follows from the choice of $V_{y_i}$ and $\delta_i$, $i\leq s$ (when
$y\in V$), and from  $g_1\in\mathcal R_n^p(\sigma,M)$ (when $y\in
L$). Hence, $h\in U(k)\cap G$ which completes the proof.
\end{proof}

Next step is to prove that the set $\mathcal R_n^f(L,M)$ is
homotopically dense in $C(N,M)$ for any perfect  $PL$-map $f\colon
N\to L$ between simplicial complexes with $\dim f\leq n$ .

\begin{lem}\label{simcomplex}
Let $N,L$ be simplicial complexes and $f\colon N\to L$ a perfect
$PL$-map with $\dim f\leq n$. Then $\mathcal R_n^f(L,M)$ is a
homotopically dense subset of $C(N,M)$
\end{lem}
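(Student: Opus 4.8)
The plan is to reduce the global statement about the perfect $PL$-map $f\colon N\to L$ to the compact, single-simplex situation already handled in Lemma~\ref{compact}, and then to glue these local approximations together over a simplexwise induction on $L$. First I would fix $g\in C(N,M)$ and a function $\e\in C(N,(0,1])$, and observe that since $f$ is perfect and $PL$, for each simplex $\tau\in L$ the preimage $f^{-1}(\tau)$ is a finite subcomplex of $N$ and $f|f^{-1}(\tau)\colon f^{-1}(\tau)\to\tau$ is a $PL$-map with $\dim\big(f|f^{-1}(\tau)\big)\le n$. This is exactly the data to which Lemma~\ref{compact} applies once we know the approximation has already been performed over $\partial\tau$. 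The combinatorial skeleton of the argument is therefore: enumerate the simplexes of $L$ in order of non-decreasing dimension, say $\tau_1,\tau_2,\dots$, and successively modify $g$ on $f^{-1}(\tau_j)$ rel $f^{-1}(\partial\tau_j)$ using Lemma~\ref{compact}, keeping the perturbations small enough (the $\e$-budget can be split, e.g.\ allocate $\e/2^j$ over the $j$-th simplex, using that $\e$ is bounded away from $0$ on each compact $f^{-1}(\tau_j)$) so that the composed homotopy is an $\e$-homotopy. Because each step is performed rel $f^{-1}(\partial\tau_j)$, the value over lower-dimensional simplexes is never disturbed, so the process is consistent.

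To organize this cleanly rather than as a transfinite hand-wave, I would instead apply Lemma~\ref{open}, Lemma~\ref{intersection} and Lemma~2.5 directly, in the spirit of the proof of Lemma~\ref{compact}. Namely, write $\mathcal R_n^f(L,M)=\bigcap_{\tau\in L}\mathcal H_\tau$, where $\mathcal H_\tau=\{g\in C(N,M):\dim g(f^{-1}(y))\le n\ \forall y\in\tau\}$; each $\mathcal H_\tau$ in turn equals $\bigcap_{k\ge 1}\mathcal R_n^{f|f^{-1}(\tau)}(\tau,M,k)$ pulled back along the restriction map $C(N,M)\to C(f^{-1}(\tau),M)$, and these are open by Lemma~\ref{open}(ii) (or by the metric case after noting $f^{-1}(\tau)$ is compact metrizable). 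One subtlety is that $L$ may be infinite, so this is a countable intersection only when $L$ is countable; in general one should run the argument over the (locally finite) collection of simplexes and invoke local finiteness of a triangulation, or equivalently work skeleton by skeleton so that at stage $k$ only finitely many simplexes meeting any given compact set are active. With the family of open sets in hand, Lemma~2.5 reduces the claim to: for every $h\in C(N,M)$, every member $U$ of the family, and every $\eta\in C(N,(0,1])$, there is $g'\in U$ that is $\eta$-homotopic to $h$ and agrees with $h$ outside the relevant compact piece. That is furnished verbatim by Lemma~\ref{compact}, applied to the $PL$-map $f|f^{-1}(\tau)\colon f^{-1}(\tau)\to\tau$ with the hypothesis over $\partial\tau$ guaranteed inductively.

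The step I expect to be the main obstacle is bookkeeping the homotopies rel the boundary preimages so that they genuinely compose to a single $\e$-homotopy on all of $N$ when $N$ and $L$ are non-compact: one must ensure the infinitely many local corrections are locally finite in $N$ (so continuity of the limit map and of the concatenated homotopy is automatic) and that the cumulative displacement at each point $x$ stays below $\e(x)$. Local finiteness follows from choosing a locally finite triangulation of $L$ and lifting through the perfect map $f$ (perfectness gives that $f^{-1}$ of a locally finite family is locally finite), and the size control follows from the fact that $\e\circ(\text{inclusion})$ attains a positive minimum on each compact $f^{-1}(\tau)$, letting us run Lemma~\ref{compact} with a constant $\epsilon_\tau>0$ smaller than that minimum times $2^{-j}$. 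Once this is arranged, the Homotopy Extension Theorem (each $f^{-1}(\partial\tau)\cup f^{-1}(\tau)$ being a subcomplex of $N$) lets us promote each local homotopy to a global one supported near $f^{-1}(\tau)$, and assembling them yields the desired $g'\in\mathcal R_n^f(L,M)$ that is $\e$-homotopic to $g$. The ``moreover'' clause about fixing $g'$ on a prescribed set $A$, if needed downstream, is inherited from the corresponding clauses in Lemma~2.5 and Lemma~\ref{compact}.
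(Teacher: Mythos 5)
Your proposal is correct and follows essentially the same route as the paper: an induction over the simplexes of $L$ organized by dimension, applying Lemma~\ref{compact} to each $f^{-1}(\sigma)\to\sigma$ rel $f^{-1}(\partial\sigma)$, gluing via the Homotopy Extension Theorem, and controlling the total displacement by a geometric series so the limit map lies in $\mathcal R_n^f(L,M)$. The only cosmetic difference is that the paper processes all $i$-dimensional simplexes simultaneously at stage $i$ (their interiors being disjoint, the local corrections glue automatically), which disposes of the local-finiteness bookkeeping you flag.
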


\begin{proof}
We follow the arguments from the proof of \cite[Lemma 11.3]{bv}. Fix
$g\in C(N,M)$ and $\alpha\in C(N,(0,1])$. We are going to find
$h\in\mathcal R_n^f(L,M)$ which is $\alpha$-homotopic to $g$. Let
$L^{(i)}$, $i\geq 0$, be the $i$-dimensional skeleton of $L$ and put
$L^{(-1)}=\varnothing$ and $h_{-1}=g$. Construct inductively a
sequence $(h_i:N\to M)_{i\geq 0}$ of maps such that
\begin{itemize}
\item $h_{i}|f^{-1}(L^{(i-1)})=h_{i-1}|f^{-1}(L^{(i-1)})$;
\item $\displaystyle h_{i}$ is $\displaystyle\frac{\alpha}{2^{i+2}}$-homotopic to $h_{i-1}$;
\item $\dim h_i(f^{-1}(y))\leq n$ for every $y\in L^{(i)}$.
\end{itemize}

Assuming that the map $h_{i-1}:N\to M$ has been constructed,
consider the complement $L^{(i)}\setminus L^{(i-1)}=\sqcup_{j\in
J_i}\overset{\circ}\sigma_j$, which is the discrete union of open
$i$-dimensional simplexes.  Since, by \cite[Lemma 4.1]{bv}, each
$f^{-1}(\sigma_j)$ is a finite subcomplex of $N$, and $\dim
h_{i-1}(f^{-1}(y))\leq n$ for every $y\in L^{(i-1)}$, we can apply
Lemma~\ref{compact}  to find a map $g_j:f^{-1}(\sigma_j)\to M$,
$j\in J_i$, such that
\begin{itemize}
\item $g_j$ coincides with $h_{i-1}$ on the set $f^{-1}(\sigma^{(i-1)}_j)$;
\item $g_j$ is $\displaystyle\frac{\alpha}{2^{i+2}}$-homotopic to
$h_{i-1}$;
\item  $\dim g_j(f^{-1}(y))\leq n$ for every $y\in\sigma_j$.
\end{itemize}

Define a map $\varphi_i:f^{-1}(L^{(i)})\to M$ by the formula
$$\varphi_i(x)=\begin{cases} h_{i-1}(x)&\mbox{if $x\in
f^{-1}(L^{(i-1)})$;}\\ g_j(x)&\mbox{if $x\in f^{-1}(\sigma_j)$.}
\end{cases}$$ It can be shown that $\varphi_i$ is
$\displaystyle\frac{\alpha}{2^{i+2}}$-homotopic to
$h_{i-1}|f^{-1}(L^{( (i)})$. Moreover, $f^{-1}(L^{(i)})$ is a
subcomplex of $N$ (according to  \cite[Lemma 4.1]{bv}). So, by the
Homotopy Extension Theorem, there exists a continuous extension
$h_i:N\to M$ of the map $\varphi_i$ which is
$\displaystyle\frac{\alpha}{2^{i+2}}$-homotopic to $h_{i-1}$. The
map $h_i$ satisfies the inductive conditions.

Then the limit map $h=\lim_{i\to\infty}h_i:N\to M$ is well-defined,
continuous and $\alpha$-homotopic to $g$. Finally, since
$h|f^{-1}(L^{(i)})=h_i|f^{-1}(L^{(i)})$ for every $i\geq 0$,
$h\in\mathcal R_n^f(L,M)$.
\end{proof}

Now, we can complete the proof of Theorem 1.1.

\begin{pro}\label{general reg}
Let $f\colon X\to Y$ be a perfect map between paracompact spaces
with $\Ddim(f)\leq n$. Then every simplicially factorizable map
$g\in C(X,M)$ can be homotopically approximated by simplicially
factorizable maps $h\in C(X,M)$ such that $\dim h(f^{-1}(y))\leq n$
for every $y\in Y$.
\end{pro}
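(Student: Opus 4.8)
The plan is to reduce the general paracompact situation to the $PL$-map case treated in Lemma~\ref{simcomplex} by exploiting the hypothesis $\Ddim(f)\le n$ together with simplicial factorizability of $g$. First, since $g$ is simplicially factorizable, write $g=g_2\circ g_1$ with $g_1\colon X\to P$ and $g_2\colon P\to M$ for some simplicial complex $P$. Given $\e\in C(X,(0,1])$, the idea is to perturb $g$ inside a controlled neighborhood so that the new map factors through a space over which we have a perfect $PL$-map of the right dimension. Concretely, use $\Ddim(f)\le n$: for a suitable open cover $\U$ of $X$ (fine enough that $g_2$ does not move points much on $\U$-small sets, and fine enough to control the homotopy by $\e$) there is a map $\phi\colon X\to\I^n$ such that $f\triangle\phi\colon X\to Y\times\I^n$ is $\U$-disjoint. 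The $\U$-disjointness means the image $(f\triangle\phi)(X)$ has a basis of neighborhoods $V$ with $(f\triangle\phi)^{-1}(V)$ a disjoint union of open sets refining $\U$; this is exactly the combinatorial input that lets one build a simplicial complex $N$, a perfect $PL$-map $\bar f\colon N\to L$ with $\dim\bar f\le n$ (where $L$ is a nerve-type complex for a cover of $Y$), and maps $u\colon X\to N$, with $\bar f\circ u$ compatible with $f$ up to the chosen cover, together with a map $v\colon N\to M$ approximating $g$ through the factorization.

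The key steps, in order, are: (1) Fix $\e$; choose compatible metrics and, using continuity of $g$ (equivalently of $g_2$) together with paracompactness, choose an open cover $\U$ of $X$ so small that any two points in a common $\U$-set are mapped by $g$ to points closer than needed to run an $\e$-homotopy, and so that $g$ is "almost constant" relative to $\U$. (2) Apply the definition of $\Ddim(f)\le n$ to get $\phi\colon X\to\I^n$ with $f\triangle\phi$ a $\U$-disjoint map. (3) Use the $\U$-disjoint structure and paracompactness of $Y$ to produce a perfect $PL$-map $\bar f\colon N\to L$ between simplicial complexes with $\dim\bar f\le n$, a map $w\colon X\to N$ and a map $q\colon N\to M$ such that $q\circ w$ is $\e$-homotopic to $g$ (this is where simplicial factorizability of $g$ is used, to push $g$ through the nerve construction), and such that the fibers satisfy $w(f^{-1}(y))\subset \bar f^{-1}(\text{one or few simplices})$. (4) Apply Lemma~\ref{simcomplex} to $\bar f\colon N\to L$ and the map $q\in C(N,M)$: there is $q'\in\mathcal R_n^{\bar f}(L,M)$ which is $\e'$-homotopic to $q$, for a suitably small $\e'$ pulled back along $w$. (5) Set $h=q'\circ w$; then $h$ is simplicially factorizable by construction, $\e$-homotopic to $g$ (composing the homotopies from (3) and (4), with the metric estimates arranged in step (1) to stay within $\e$), and $\dim h(f^{-1}(y))\le n$ for every $y\in Y$ because $h(f^{-1}(y))$ is contained in $q'$ of a bounded union of fibers of $\bar f$, each of dimension $\le n$, hence a finite union of $\le n$-dimensional closed sets.

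The main obstacle I expect is step (3): building the perfect $PL$-map $\bar f\colon N\to L$ out of the $\U$-disjoint map $f\triangle\phi$, in the non-compact paracompact setting, with enough control to keep the approximation $\e$-small and to guarantee perfectness of $\bar f$ and the bound $\dim\bar f\le n$ simultaneously. One has to be careful that the nerve of a locally finite cover of $Y$ (refining the pushed-down cover) carries a map from $Y$, lift it to a complex $N$ over $L$ using the disjoint open families refining $\U$ on $X$, and verify that $f$ transported to this picture becomes $PL$ and perfect — the perfectness of $f$ is what should survive here. An alternative, possibly cleaner, route is to cite directly the relevant reduction from \cite{bv} (the proof of Theorem 1.1 there presumably already contains this factorization lemma for $\Ddim$), thereby replacing step (3) by a reference; the genuinely new content is then just the invocation of Lemma~\ref{simcomplex}, i.e.\ of the $\mathrm{FAP}(n)$-hypothesis on $M$ via Theorem~2.9.
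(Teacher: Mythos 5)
Your outline matches the paper's proof: the paper carries out exactly your ``alternative route'' for step (3), citing \cite[Theorem 6]{bv} to produce, from a locally finite cover $\V$ of $Y$ with nerve $L$, a $\U$-map $\alpha\colon X\to K$ and a perfect $PL$-map $p\colon K\to L$ with $\beta\circ f=p\circ\alpha$ and $\dim p\le\Ddim f$, and citing \cite[Lemma 8.1]{bv} (via the pseudometric $\rho_D$ pulled back from $M$ through the factorization $g=g^D\circ g_D$) to push $g$ through this diagram up to an $\epsilon/2$-homotopy, after which restricting to the subcomplex $N$ of simplexes meeting $\overline{\alpha(X)}$, applying Lemma~\ref{simcomplex} to $p_N$, and composing with $\alpha$ are precisely your steps (4)--(5). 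The only detail you gloss over is converting the function $\epsilon\in C(X,(0,1])$ into a function $\epsilon_1$ on $N$ with $\epsilon_1\circ\alpha\le\epsilon$, which the paper arranges by refining $\U$ to a cover $\U_1$ with $\inf\{\epsilon(x):x\in U\}>0$ on each member.
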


\begin{proof}
We follow the construction from the proof of \cite[Proposition
3.4]{bv1}. Fix a simplicially factorizable map $g\in C(X,M)$ and
$\epsilon\in C(X,(0,1])$. Then there exist a simplicial complex $D$
and maps $g_D\colon X\to D$, $g^D\colon D\to M$ with $g=g^D\circ
g_D$. The metric $\rho$ induces a continuous pseudometric $\rho_D$
on $D$, $\rho_D(x,y)=\rho(g^D(x),g^D(y))$. Since $D$ is a
neighborhood retract of a locally convex space (see \cite{ca} and
\cite{si}) and any sufficiently close maps from a given space into
$D$ are homotopic, we apply \cite[Lemma 8.1]{bv} to find an open
cover $\U$ of $X$ satisfying the following condition: if
$\alpha\colon X\to K$ is a $\U$-map into a paracompact space $K$
(i.e., $\alpha^{-1}(\omega)$ refines $\U$ for some open cover
$\omega$ of $K$), then there exists a map $q'\colon G\to D$, where
$G$ is an open neighborhood of $\overline{\alpha(X)}$ in $K$, such
that $g_D$ and $q'\circ\alpha$ are $\epsilon/2$-homotopic with
respect to the pseudometric $\rho_D$. Let $\U_1$ be an open cover of
$X$ refining $\U$ with $\inf\{\epsilon(x):x\in U\}>0$ for all
$U\in\U_1$.

Next, according to \cite[Theorem 6]{bv}, there exists a locally
finite open cover $\V$ of $Y$ such that: for any $\V$-map
$\beta\colon Y\to L$ into a simplicial complex $L$ we can find an
$\U$-map $\alpha\colon X\to K$ into a simplicial complex $K$ and a
perfect $PL$-map $p\colon K\to L$ with $\beta\circ f=p\circ\alpha$
and $\dim p\leq\Ddim f$.  Take $L$ to be the nerve of the cover $\V$
and $\beta\colon Y\to L$ the corresponding natural map. Then there
are a simplicial complex $K$ and maps $p$ and $\alpha$ satisfying
the above conditions. Hence, the following diagram is commutative:
$$\xymatrix{
X\ar[d]_f\ar[r]^\alpha& K\ar[d]^p\\
Y\ar[r]_\beta&L
}
$$
The choice of the cover $\U$ guarantees the existence of a map
$\varphi_D\colon G\to D$, where $G\subset K$ is an open neighborhood
of $\overline{\alpha(X)}$, such that $g_D$ and
$h_D=\varphi_D\circ\alpha$ are $\epsilon/2$-homotopic with respect
to $\rho_D$. Then, according to the definition of $\rho_D$,
$h'=g^D\circ \varphi_D\circ\alpha$ is $\epsilon/2$-homotopic to $g$
with respect to $\rho$. Replacing the triangulation of $K$ by a
suitable subdivision, we may additionally assume that no simplex of
$K$ meets both $\overline{\alpha(X)}$ and $K\backslash G$. So, the
union $N$ of all simplexes $\sigma\in K$ with
$\sigma\cap\overline{\alpha(X)}\neq\varnothing$ is a subcomplex of
$K$ and $N\subset G$. Moreover, since $N$ is closed in $K$,
$p_N=p|N\colon N\to L$ is a perfect map and $\dim p_N\leq\Ddim f$.
Therefore, we have the following commutative diagram, where $N$ and
$L$ are finite complexes, $p_N$ is a $PL$-map and $\varphi=g^D\circ
\varphi_D$:
$$\xymatrix{
N\ar[d]_{p_N}&X\ar[d]^f\ar[l]_{\alpha}\ar[r]^{\varphi\circ\alpha}&M\\
L&Y\ar[l]^\beta\ar[ru]_{\varphi}
}$$

Using that $\alpha$ is a $\U_1$-map and $\inf\{\epsilon(x):x\in
U\}>0$ for all $U\in\U_1$, we can construct a continuous function
$\epsilon_1:N\to(0,1]$ with $\epsilon_1\circ\alpha\leq\epsilon$.
 Then, by Lemma~\ref{simcomplex}, there exists a map
$\varphi_1\in C(N,M)$ which is $\epsilon_1/2$-homotopic to $\varphi$
and $\dim\varphi_1(p_N^{-1}(z))\leq n$ for every $z\in L$. Let
$g'=\varphi_1\circ\alpha$. Obviously, $g'$ is simplicially
factorizable. It is easily seen that $g'$ and $g$ are
$\epsilon$-homotopic and $g'(f^{-1}(y))\subset
\varphi_1(p_N^{-1}(\beta(y)))$ for all $y\in Y$. So, $\dim
g'(f^{-1}(y))\leq\dim\varphi_1(p_N^{-1}(\beta(y)))\leq n$. The proof
is completed.
\end{proof}

\section{Some more examples of $\mathrm{FAP}(n)$-spaces}

The class of $AP(n,0)$-spaces was introduced by the authors in
\cite{bv1}: we say that a metrizable space $M$ has the {\em
$\mathrm{AP}(n,0)$-approximation property} (br.,
$M\in\mathrm{AP}(n,0))$ if for every $\e>0$ and a map
$g\colon\I^n\to M$ there exists a $0$-dimensional map
$g'\colon\I^n\to M$ which is $\e$-homotopic to $g$. Next proposition
provides a wide class of spaces with the $\mathrm{FAP}(n)$-property.

\begin{pro}
Let $M_1\in\mathrm{AP}(n,0)$ be a completely metrizable
$n$-dimensional space, $n\geq 0$. Then $M_1\times M_2$ has the
$\mathrm{FAP}(n)$-property for any completely metrizable space
$M_2$.
\end{pro}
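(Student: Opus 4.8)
The plan is to reduce the $\mathrm{FAP}(n)$-property of $M = M_1\times M_2$ to the local characterization (Theorem~\ref{local}) and then, locally, to the simplicial characterization (Theorem~2.9). Since $M_2$ is completely metrizable, it is an absolute $G_\delta$; but more to the point, I would first reduce to the case where $M_2$ is a convex subset of a Banach space (equivalently, $M_2$ is an AR). This is because, by Theorem~\ref{local}, it suffices to check $\mathrm{FAP}(n)$ locally, and every point of a metrizable space has arbitrarily small closed neighborhoods that embed as closed subsets of convex sets; combined with the observation (used already in the proof of Theorem~\ref{local}) that open subsets of $\mathrm{FAP}(n)$-spaces are again $\mathrm{FAP}(n)$, it is enough to prove the statement when $M_2$ is convex in a Banach space. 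In that situation $M_2$ has the $\mathrm{FAP}(k)$-property for every $k$ — indeed the identity-type homotopy argument shows a convex set absorbs any finite-dimensional image — and in particular maps into $M_2$ are very flexible.

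Next I would verify condition (ii) of Theorem~2.9 for $M = M_1\times M_2$: given an at most $n$-dimensional simplicial map $p\colon K\to L$ between finite complexes, a map $g_0=(g_0^1,g_0^2)\colon K\to M_1\times M_2$, and $\epsilon>0$, I must produce $g'\in\mathcal R_n^p(L,M)$ that is $\epsilon$-homotopic to $g_0$. The key point is that $\dim g'(p^{-1}(z))\le n$ is automatic as soon as the $M_1$-coordinate is handled, because $\dim M_1\le n$ forces $\dim\bigl(g'^{\,1}(p^{-1}(z))\times g'^{\,2}(p^{-1}(z))\bigr)\le \dim M_1 + \dim g'^{\,2}(p^{-1}(z))$, and if I arrange the $M_2$-coordinate to be constant on each fiber $p^{-1}(z)$ this gives dimension $\le n$. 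So the real work is: approximate $g_0^2\colon K\to M_2$ by a map $g_2'$ that factors through $L$ via $p$ (i.e. $g_2' = \tilde g_2\circ p$ for some $\tilde g_2\colon L\to M_2$), keeping it $\epsilon$-homotopic to $g_0^2$; this is possible precisely because $M_2$ is convex (an AR) and $p$ is a simplicial map between finite complexes, so one defines $\tilde g_2$ on vertices of $L$ by averaging, extends affinely, and uses straight-line homotopies. Simultaneously, using $M_1\in\mathrm{AP}(n,0)$ together with Theorem~2.9 applied to $M_1$ (which holds since an $n$-dimensional completely metrizable $\mathrm{AP}(n,0)$-space is readily seen to be $\mathrm{FAP}(n)$ — this is essentially \cite{bv1} and should be citable), replace $g_0^1$ by $g_1'$ with $\dim g_1'(p^{-1}(z))\le n$ for all $z$, $\epsilon$-homotopic to $g_0^1$. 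Then $g' = (g_1', g_2')$ works: on each fiber $p^{-1}(z)$ the $M_2$-coordinate is the single point $\tilde g_2(z)$, so $g'(p^{-1}(z)) = g_1'(p^{-1}(z))\times\{\tilde g_2(z)\}$ has dimension $\le n$.

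Finally, the two homotopies combine coordinatewise into an $\epsilon$-homotopy from $g_0$ to $g'$ (taking $\epsilon/2$ in each coordinate and using the max or sum metric on the product), so $g'\in\mathcal R_n^p(L,M)$ and condition (ii) of Theorem~2.9 is verified; hence $M_1\times M_2$ has the $\mathrm{FAP}(n)$-property.

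\textbf{Where the difficulty lies.} The routine parts are the homotopy-extension bookkeeping and the coordinatewise combination. The genuine obstacle is the factorization step for the $M_2$-coordinate: I must pass from $g_0^2\colon K\to M_2$ to something constant on the fibers $p^{-1}(z)$, and a priori $p^{-1}(z)$ need not be a subcomplex (it is a union of open simplices, one for each simplex of $K$ over the carrier of $z$). One must either subdivide so that the relevant fibers become subcomplexes, or argue by induction over skeleta of $L$ exactly as in Lemma~\ref{simcomplex} — at each stage extending over the open $i$-simplices $\overset{\circ}\sigma_j$ of $L^{(i)}\setminus L^{(i-1)}$ using convexity of $M_2$ to make the $M_2$-coordinate constant on $f^{-1}(\sigma_j)$ while keeping it $\alpha/2^{i+2}$-homotopic to the previous stage and unchanged over $f^{-1}(L^{(i-1)})$. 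Getting this inductive approximation to be genuinely \emph{constant} on fibers, rather than merely small-diameter, is the one place where convexity of $M_2$ (not just local contractibility) is essential, and it is the step I would write out most carefully.
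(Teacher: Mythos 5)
There are two genuine gaps here, and together they sink the approach. First, the reduction to ``$M_2$ convex in a Banach space'' is invalid. The hypothesis on $M_2$ is only complete metrizability; a small closed neighborhood of a point of $M_2$ embeds as a closed subset of a convex set, but it is not itself convex, and neither convexity nor any $\mathrm{FAP}$-type property passes to closed subsets. Nor can you approximate inside the ambient convex set and hope to land back in $M_2$: the maps you must approximate take values in $M_1\times M_2$, and $M_2$ may have no local contractibility at all (it could be one-dimensional, totally disconnected on large scales, etc.). Second, even if $M_2$ were as nice as $\IR$ or $S^1$, your key step --- perturbing $g_0^2$ to a map that is \emph{constant} on each fiber $p^{-1}(z)$ --- is not an $\epsilon$-small perturbation. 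Take $m=0$, $n=1$, so the relevant fiber is all of $\I^n$: a surjection $\I\to S^1$, or onto a nondegenerate interval of $\IR$, is not $\epsilon$-homotopic to a constant map for small $\epsilon$. Hence the ``automatic'' dimension estimate $\dim\bigl(g_1'(p^{-1}(z))\times\{\tilde g_2(z)\}\bigr)\le n$ rests on a construction that cannot be carried out, and the weaker fallback (making $g_2'(p^{-1}(z))$ merely $0$-dimensional) fails for the same reason, since a $0$-dimensional connected image is a point.

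The paper's argument is different in exactly the place where yours breaks. Using $M_1\in\mathrm{AP}(n,0)$ (via Theorem 1.1 of \cite{bv1}), it replaces $g_1$ by $h_1$ such that each restriction $h_1|(\{z\}\times\I^n)$ has $0$-dimensional point-preimages; equivalently, $\pi\triangle h_1\colon\I^m\times\I^n\to\I^m\times M_1$ is a $0$-dimensional perfect map. It then applies Theorem 1.1 of the present paper to \emph{that} map, using only the trivially true $\mathrm{FAP}(0)$-property of $M_2$, to replace $g_2$ by $h_2$ with $\dim h_2\bigl((\{z\}\times\I^n)\cap h_1^{-1}(z_1)\bigr)\le 0$. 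So the second coordinate is required to be $0$-dimensional only on the fibers of $h_1$ inside each slice --- sets that are already $0$-dimensional --- which is achievable for an arbitrary completely metrizable $M_2$, unlike constancy on the whole slice. The conclusion then comes from the dimension-lowering Hurewicz theorem applied to the projection $p_z\colon h(\{z\}\times\I^n)\to h_1(\{z\}\times\I^n)$: the target has dimension $\le\dim M_1=n$ and the fibers of $p_z$ are the $0$-dimensional sets above, so $\dim h(\{z\}\times\I^n)\le n+0$. If you want to salvage your write-up, replace the ``constant on fibers'' step by this fiberwise $0$-dimensionality of the first coordinate plus the Hurewicz inequality; the convexity reduction should be discarded entirely.
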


\begin{proof}
We are going to show that every map
$g=(g_1,g_2)\colon\I^m\times\I^n\to M_1\times M_2$, where $m\geq 0$,
can be homotopically approximated by a map $h\in C(\I^m\times\I^n,
M_1\times M_2)$ with $\dim h(\{z\}\times\I^n)\leq n$ for all
$z\in\I^m$. Denote by $\pi$ the projection
$\pi\colon\I^m\times\I^n\to \I^m$. Since $M_1$ has the
$\mathrm{AP}(n,0)$-property, the map $g_1\colon\I^m\times\I^n\to
M_1$ can be homotopically approximated by a map
$h_1\colon\I^m\times\I^n\to M_1$ such that all restrictions
$h_1|(\{z\}\times\I^n)$, $z\in\I^m$, have $0$-dimensional fibers,
see \cite[Theorem 1.1]{bv1}. This means that the diagonal product
$\pi\triangle h_1\colon\I^m\times\I^n\to\I^m\times M_1$ is a
0-dimensional map. It follows from our definition that every
metrizable space has the $\mathrm{FAP}(0)$-property. So, by Theorem
1.1, the map $g_2\colon\I^m\times\I^n\to M_2$ can be homotopically
approximated by a map $h_2\colon\I^m\times\I^n\to M_2$ such that all
images $h_2\big((\{z\}\times\I^n)\cap h_1^{-1}(z_1)\big)$,
$(z,z_1)\in\I^m\times M_1$, are $0$-dimensional. Then
$h=(h_1,h_2)\colon\I^m\times\I^n\to M_1\times M_2$ approximates $g$.
For any $z\in\I^m$ consider the map $p_z:h(\{z\}\times\I^n)\to
h_1(\{z\}\times\I^n)$, $p_z(h(z,t))=h_1((z,t))$, $t\in\I^n$. Observe
that $\dim h_1(\{z\}\times\I^n)\leq n$ (recall that $\dim M_1=n$)
and $p_z^{-1}(z_1)=h_2\big((\{z\}\times\I^n)\cap h_1^{-1}(z_1)\big)$
for any $z_1\in h_1(\{z\}\times\I^n)$. So, $\dim p_z=0$. According
to the dimension-lowering Hurewicz  theorem, $\dim
h(\{z\}\times\I^n)\leq\dim h_1(\{z\}\times\I^n)+\dim p_z\leq n$.
This completes the proof.
\end{proof}

Since every space with the disjoint $(n-1)$-disks property
$\mathrm{DD^{n-1}P}$, in particular, every manifold modeled on the
$n$-dimensional Menger cube or the $n$-dimensional N\:{o}beling
space, is an $\mathrm{AP}(n,0)$-space, see \cite[Corollary
6.5]{bv1}, we have the following

\begin{cor}
Let $X$ be a completely metrizable $n$-dimensional space with the
disjoint $(n-1)$-disks property. Then $X\times M$ has the
$\mathrm{FAP}(n)$-property for any completely metrizable space $M$.
\end{cor}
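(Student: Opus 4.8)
The plan is to deduce Corollary 4.3 directly from Proposition 4.2. First I would recall that every space with the disjoint $(n-1)$-disks property $\mathrm{DD^{n-1}P}$ is an $\mathrm{AP}(n,0)$-space; this is exactly \cite[Corollary 6.5]{bv1}, and it is the cited input that does the real work. So given a completely metrizable $n$-dimensional space $X$ with $\mathrm{DD^{n-1}P}$, we know $X\in\mathrm{AP}(n,0)$ and $X$ is completely metrizable of dimension $n$. That is precisely the hypothesis on $M_1$ in Proposition 4.2.

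Next, set $M_1=X$ and $M_2=M$, where $M$ is an arbitrary completely metrizable space. Proposition 4.2 then yields immediately that $M_1\times M_2=X\times M$ has the $\mathrm{FAP}(n)$-property, which is the assertion of the corollary. In other words, the entire proof is the one-line chain: $\mathrm{DD^{n-1}P}\Rightarrow\mathrm{AP}(n,0)$ (by \cite[Corollary 6.5]{bv1}), and then $\mathrm{AP}(n,0)\Rightarrow$ the product has $\mathrm{FAP}(n)$ (by Proposition 4.2).

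There is essentially no obstacle here, since the corollary is a formal specialization of the preceding proposition once the $\mathrm{DD^{n-1}P}\Rightarrow\mathrm{AP}(n,0)$ implication is invoked; the only point worth a word is to note why this case is of interest, namely that manifolds modeled on the $n$-dimensional Menger cube or on the $n$-dimensional N\"obeling space do satisfy $\mathrm{DD^{n-1}P}$ and hence fall under the corollary. If one wanted to be completely self-contained one could observe that $X$ completely metrizable automatically gives $X$ metrizable, so the hypothesis of Proposition 4.2 that $M_1$ be metrizable with the $\mathrm{AP}(n,0)$-property and completely metrizable and $n$-dimensional is met verbatim.

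\begin{proof}
By \cite[Corollary 6.5]{bv1}, every space with the disjoint $(n-1)$-disks property is an $\mathrm{AP}(n,0)$-space. Hence $X\in\mathrm{AP}(n,0)$, and $X$ is a completely metrizable $n$-dimensional space. Applying Proposition 4.2 with $M_1=X$ and $M_2=M$, we conclude that $X\times M$ has the $\mathrm{FAP}(n)$-property.
\end{proof}
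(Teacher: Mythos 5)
Your proof is correct and is exactly the paper's argument: the authors also deduce the corollary by quoting \cite[Corollary 6.5]{bv1} for the implication $\mathrm{DD^{n-1}P}\Rightarrow\mathrm{AP}(n,0)$ and then applying the preceding proposition with $M_1=X$, $M_2=M$. The only discrepancy is cosmetic: in the paper's numbering the proposition is 4.1 and the corollary is 4.2, not 4.2 and 4.3.
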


Now, we introduced a subclass of the $\mathrm{FAP}(n)$-spaces: a
metric space $M$ is said to be a {\em strong
$\mathrm{FAP}(n)$-space} if $M\in\mathrm{FAP}(k)$ for all $k\leq n$.
This is equivalent to the following condition: any map $g\in
C(\I^m\times\I^k,M)$, where $m\geq 0$ and $k\leq n$, can be
homotopically approximated by a map $g'\in C(\I^m\times\I^k,M)$ with
$\dim g'(\{z\}\times\I^k)\leq k$ for all $z\in\I^m$.

The local nature of strong $\mathrm{FAP}(n)$-spaces follows from
Theorem 2.7.

\begin{thm}
A complete metric space $M$ has the strong
$\mathrm{FAP}(n)$-property iff every $z\in M$ has a neighborhood
with the same property.
\end{thm}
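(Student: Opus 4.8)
The plan is to deduce this from Theorem~\ref{local} (Theorem 2.7) applied simultaneously to all dimensions $k\le n$. Recall that by definition $M$ is a strong $\mathrm{FAP}(n)$-space precisely when $M\in\mathrm{FAP}(k)$ for every $k\le n$. So the statement to be proved is simply that, for a complete metric space $M$, one has $M\in\mathrm{FAP}(k)$ for all $k\le n$ if and only if every point of $M$ has a neighborhood with this property.

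For the ``only if'' direction, first I would note that if $M$ is a strong $\mathrm{FAP}(n)$-space, then (as already observed in the proof of Theorem~\ref{local}) every open subset $U\subset M$ lies in $\mathrm{FAP}(k)$ for each $k\le n$, hence is itself a strong $\mathrm{FAP}(n)$-space; in particular every $z\in M$ has such a neighborhood, namely $M$ itself or any open set around $z$. For the ``if'' direction, suppose every $z\in M$ has an open neighborhood $U_z$ which is a strong $\mathrm{FAP}(n)$-space. Fix any $k\le n$. Then for this fixed $k$, every $z\in M$ has a neighborhood $U_z\in\mathrm{FAP}(k)$. Since $M$ is a complete metric space, Theorem~\ref{local} applied with $n$ replaced by $k$ yields $M\in\mathrm{FAP}(k)$. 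As $k\le n$ was arbitrary, $M\in\mathrm{FAP}(k)$ for all $k\le n$, i.e.\ $M$ is a strong $\mathrm{FAP}(n)$-space.

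There is essentially no obstacle here: the only point requiring a word of care is that Theorem~\ref{local} is stated for a fixed $n$, so one must apply it separately for each $k\in\{0,1,\dots,n\}$, each time verifying that the localizing neighborhoods $U_z$ (which are strong $\mathrm{FAP}(n)$-spaces) do belong to $\mathrm{FAP}(k)$ — but this is immediate from the definition of ``strong''. I would therefore present the argument as a short paragraph: reduce to a fixed $k\le n$, quote the hypothesis to get the local $\mathrm{FAP}(k)$-neighborhoods, invoke Theorem~\ref{local}, and let $k$ vary. The completeness of $M$ is used exactly once, to license the application of Theorem~\ref{local}.

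\begin{proof}
If $M$ is a strong $\mathrm{FAP}(n)$-space, then $M\in\mathrm{FAP}(k)$ for every $k\le n$, and as observed in the proof of Theorem~\ref{local}, every open subset of $M$ then also belongs to $\mathrm{FAP}(k)$ for each $k\le n$; hence every $z\in M$ has a neighborhood which is a strong $\mathrm{FAP}(n)$-space. Conversely, suppose each $z\in M$ has an open neighborhood $U_z$ which is a strong $\mathrm{FAP}(n)$-space. Fix $k\le n$. By the definition of a strong $\mathrm{FAP}(n)$-space, $U_z\in\mathrm{FAP}(k)$ for every $z\in M$. Since $M$ is a complete metric space, Theorem~\ref{local} (with $n$ replaced by $k$) gives $M\in\mathrm{FAP}(k)$. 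As $k\le n$ was arbitrary, $M\in\mathrm{FAP}(k)$ for all $k\le n$; that is, $M$ has the strong $\mathrm{FAP}(n)$-property.
\end{proof}
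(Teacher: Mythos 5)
Your argument is correct and is exactly what the paper intends: the paper dismisses this theorem with the single remark that it ``follows from Theorem 2.7,'' and your proof simply spells out that reduction by applying Theorem~2.7 once for each $k\le n$. Nothing further is needed.
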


By \cite{tv1}, any Euclidean space possesses the strong
$\mathrm{FAP}(n)$-property for all $n\geq 0$. More general examples
of strong $\mathrm{FAP}(n)$-spaces are provided by next proposition.

\begin{pro}
Let each $M_i$, $i=1,2,..,n$, be a completely metrizable
$\mathrm{LC}^{0}$-space without isolated points. Then the product
$\prod_{i=1}^{i=n}M_i$ is a strong $\mathrm{FAP}(n)$-space.
\end{pro}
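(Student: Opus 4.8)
The plan is to reduce the statement to Proposition 4.1 combined with an induction on $n$, using the fact that each $\mathrm{LC}^0$-space without isolated points is (in small dimensions) an $\mathrm{AP}(k,0)$-space. More precisely, the key claim is that a completely metrizable $\mathrm{LC}^0$-space $N$ without isolated points belongs to $\mathrm{AP}(1,0)$: this follows from \cite{bv1}, since a one-dimensional (or higher) $\mathrm{LC}^0$-space without isolated points enjoys the disjoint-points / disjoint $0$-disks property at the level needed for \cite[Theorem 1.1]{bv1} to apply. Granting this, I would set up the argument so that at each stage one factor is used to kill dimension one unit at a time.

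First I would prove the case $n=1$ directly: if $M$ is a completely metrizable $\mathrm{LC}^0$-space without isolated points, then $M\in\mathrm{AP}(1,0)$ and hence, by Proposition 4.1 applied with $M_1=M$ and $M_2$ a one-point space (or more carefully, by observing that $M$ itself is $1$-dimensional or may be replaced by a $1$-dimensional analysis via \cite[Theorem 1.1]{bv1}), $M$ has the $\mathrm{FAP}(1)$-property; since trivially $M\in\mathrm{FAP}(0)$, $M$ is a strong $\mathrm{FAP}(1)$-space. For the inductive step, suppose the product of any $n-1$ such spaces is a strong $\mathrm{FAP}(n-1)$-space, and consider $M=\prod_{i=1}^{n}M_i$. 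Write $M=M_1\times M'$ with $M'=\prod_{i=2}^{n}M_i$. The point is that $M_1\in\mathrm{AP}(n,0)$ — indeed $M_1$, being a completely metrizable $\mathrm{LC}^0$-space without isolated points, has the disjoint $(n-1)$-disks property after taking a suitable $n$-dimensional model, so by \cite[Corollary 6.5]{bv1} it is an $\mathrm{AP}(n,0)$-space — and $M'$ is completely metrizable; Proposition 4.1 then gives $M\in\mathrm{FAP}(n)$. For the lower values $k\leq n$ one applies the same reasoning with $n$ replaced by $k$: $M_1\in\mathrm{AP}(k,0)$ and $M'$ completely metrizable, so $M\in\mathrm{FAP}(k)$. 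Hence $M\in\mathrm{FAP}(k)$ for all $k\leq n$, i.e. $M$ is a strong $\mathrm{FAP}(n)$-space.

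The step I expect to be the main obstacle is the verification that a completely metrizable $\mathrm{LC}^0$-space $N$ without isolated points belongs to $\mathrm{AP}(k,0)$ for the relevant $k$, since $N$ need not be finite-dimensional and the cited results in \cite{bv1} are phrased either for spaces with the disjoint $(n-1)$-disks property or via \cite[Theorem 1.1]{bv1}. The resolution I would pursue is to note that $\mathrm{LC}^0$ together with "no isolated points" is exactly what is needed to perturb any map $g\colon\I^k\to N$ slightly so as to make it $0$-dimensional on $\I^k$: locally one can push off small cells using local path-connectedness and the absence of isolated points to create branching, and the source-limitation/Baire-category machinery of Section 2 (Lemma 2.5 and Corollary 2.6) assembles these local perturbations into a global one. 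This is precisely the content that \cite[Theorem 1.1, Corollary 6.5]{bv1} were designed to deliver, so modulo a careful citation the argument goes through; if a self-contained proof is wanted, one would redo the skeleton-by-skeleton construction of Lemma 3.2 with $\I^k$ triangulated, perturbing over each simplex into a small $\mathrm{LC}^0$ neighborhood.

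Finally, a remark on why the product structure is essential and cannot be weakened to a single $\mathrm{LC}^0$-space: a single $1$-dimensional $\mathrm{LC}^0$-space without isolated points is an $\mathrm{FAP}(1)$-space but, being at most $1$-dimensional, cannot absorb the extra dimensions needed for $\mathrm{FAP}(n)$ with $n\geq 2$ in the way Proposition 4.1 requires; the $n$ factors are what make the combined space "$n$-dimensionally flexible". The write-up would therefore emphasize that the induction consumes one factor per unit of dimension, invoking Proposition 4.1 at each stage with $M_1$ one of the factors and $M_2$ the product of the remaining ones, and invoking Theorem 2.7 only implicitly (it is already used to justify Theorem 4.3, which we do not need here). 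I would close by noting that combining this with Theorem 4.3 gives the local version automatically, and that the case of all $M_i$ equal to $\IR$ recovers the strong $\mathrm{FAP}(n)$-property of Euclidean space from \cite{tv1}.
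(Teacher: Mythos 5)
There is a genuine gap, and it sits at the exact point you flagged as the ``main obstacle.'' Your argument hinges on the claim that a \emph{single} completely metrizable $\mathrm{LC}^{0}$-space without isolated points belongs to $\mathrm{AP}(k,0)$ for the relevant $k$ (you assert $M_1\in\mathrm{AP}(n,0)$ via the disjoint $(n-1)$-disks property and \cite[Corollary 6.5]{bv1}, and later say that $\mathrm{LC}^0$ plus ``no isolated points'' suffices to perturb any $g\colon\I^k\to N$ to a $0$-dimensional map). This is false for $k\geq 2$: if $g'\colon\I^k\to N$ is $0$-dimensional, the Hurewicz dimension-lowering theorem forces $\dim g'(\I^k)\geq k$, so a $1$-dimensional space such as $N=\IR$ admits no $0$-dimensional map from $\I^2$ at all, hence $\IR\notin\mathrm{AP}(2,0)$; likewise $\IR$ fails $\mathrm{DD}^{1}\mathrm{P}$ (two surjections of $\I$ onto $[0,1]$ cannot be pushed to disjoint images). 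Your own closing paragraph, where you correctly note that a single $1$-dimensional factor ``cannot absorb the extra dimensions,'' contradicts the key claim on which your inductive step rests. The induction also never actually uses its hypothesis: the step is carried entirely by the (false) assertion about the single factor $M_1$. A secondary issue is that Proposition 4.1 requires its first factor to be $k$-dimensional, which a single $\mathrm{LC}^0$-space need not be.

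The paper's proof avoids all of this by citing a different result, \cite[Corollary 6.3]{bv1}: it is the \emph{product of $k$} completely metrizable $\mathrm{LC}^{0}$-spaces without isolated points that has the $\mathrm{AP}(k,0)$-property. To get $\mathrm{FAP}(k)$ for $k\leq n$ one writes $\prod_{i=1}^{n}M_i=\bigl(\prod_{i=1}^{k}M_i\bigr)\times\bigl(\prod_{i=k+1}^{n}M_i\bigr)$ and feeds the entire $k$-fold sub-product into Proposition 4.1 as the first factor, with the remaining factors as the completely metrizable second factor. So the correct mechanism does ``consume one factor per unit of dimension,'' as your intuition suggested, but only at the level of the whole sub-product; no single factor is ever claimed to be $\mathrm{AP}(k,0)$ for $k\geq 2$. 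To repair your write-up, replace the claim ``$M_1\in\mathrm{AP}(k,0)$'' by ``$\prod_{i=1}^{k}M_i\in\mathrm{AP}(k,0)$'' with the citation \cite[Corollary 6.3]{bv1}, and drop the induction, which then becomes unnecessary.
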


\begin{proof}
According to \cite[Corollary 6.3]{bv1}, any product of $k$ many
completely metrizable spaces $\mathrm{LC}^{0}$-space without
isolated points has the $\mathrm{AP}(k,0)$-property. Then
Proposition 4.1 completes the proof.
\end{proof}

\begin{cor}
Any product of finitely many completely metrizable $1$-dimensional
$\mathrm{LC}^{0}$-spaces without isolated points has the
$\mathrm{FAP}(n)$-property for all $n\geq 0$.
\end{cor}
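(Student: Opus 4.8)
The plan is to show that the finite product $M=\prod_{i=1}^{k}M_i$ of $1$-dimensional completely metrizable $\mathrm{LC}^{0}$-spaces without isolated points is a strong $\mathrm{FAP}(n)$-space for every $n\ge 0$; this then yields the corollary directly from the definition. The key point is that the notion of strong $\mathrm{FAP}(n)$-space means $M\in\mathrm{FAP}(\ell)$ for all $\ell\le n$, so it suffices to verify $M\in\mathrm{FAP}(\ell)$ for an arbitrary fixed $\ell$, with $k$ (the number of factors) being whatever it is. The cleanest route is to split the factors: write $M=\big(\prod_{i=1}^{\ell}M_i\big)\times\big(\prod_{i=\ell+1}^{k}M_i\big)$ when $k\ge \ell$, and handle the case $k<\ell$ by padding.

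First I would treat the case $k\ge\ell$. By Proposition 4.4 (applied with $n$ replaced by $\ell$ and using the first $\ell$ factors), the product $M_1'=\prod_{i=1}^{\ell}M_i$ has the strong $\mathrm{FAP}(\ell)$-property, hence in particular $M_1'\in\mathrm{FAP}(\ell)$. More to the point, by \cite[Corollary 6.3]{bv1} the product of $\ell$ many completely metrizable $\mathrm{LC}^{0}$-spaces without isolated points has the $\mathrm{AP}(\ell,0)$-property, and being a product of $\ell$ one-dimensional spaces it is at most $\ell$-dimensional; in fact it is exactly $\ell$-dimensional since none of the factors has isolated points. So $M_1'$ is a completely metrizable $\ell$-dimensional $\mathrm{AP}(\ell,0)$-space, and Proposition 4.1 (with $n=\ell$, $M_1=M_1'$, and $M_2=\prod_{i=\ell+1}^{k}M_i$, which is completely metrizable) gives $M_1'\times M_2\in\mathrm{FAP}(\ell)$, i.e.\ $M\in\mathrm{FAP}(\ell)$.

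For the remaining case $k<\ell$, I would observe that $M=\prod_{i=1}^{k}M_i$ is a completely metrizable $k$-dimensional $\mathrm{AP}(k,0)$-space (again by \cite[Corollary 6.3]{bv1} and the no-isolated-points hypothesis guaranteeing $\dim M=k$), so Proposition 4.1 applied with $M_2$ a single point gives $M\in\mathrm{FAP}(k)$. Since $\mathrm{FAP}(k)\subseteq\mathrm{FAP}(\ell)$ is \emph{not} automatic, the honest way is to note that any map $g\colon\I^m\times\I^\ell\to M$ with $\dim g(\{z\}\times\I^\ell)\le k$ for all $z$ automatically satisfies $\dim g(\{z\}\times\I^\ell)\le k\le\ell$; thus $M\in\mathrm{FAP}(k)$ already forces $M\in\mathrm{FAP}(\ell)$ for every $\ell\ge k$. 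Combining the two cases, $M\in\mathrm{FAP}(\ell)$ for all $\ell\ge 0$, which is exactly the strong $\mathrm{FAP}(n)$-property for all $n$.

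The only delicate point I anticipate is the bookkeeping about dimensions: one must be sure that $\dim\big(\prod_{i=1}^{\ell}M_i\big)=\ell$ (so that Proposition 4.1 applies with the hypothesis ``$n$-dimensional'' literally satisfied) rather than merely $\le\ell$; this uses that each $M_i$ is $1$-dimensional with no isolated points, so each contains a copy of a nondegenerate arc-like piece forcing the product dimension up. Everything else is a direct citation of Proposition 4.1, Proposition 4.4, and \cite[Corollary 6.3]{bv1}, together with the trivial monotonicity remark $\mathrm{FAP}(k)\Rightarrow\mathrm{FAP}(\ell)$ when the approximating maps already have $\le k$-dimensional fibers. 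Hence the corollary is essentially immediate once the product-dimension observation is in place; indeed, as the paper states, it ``follows from Proposition 4.4'' by taking all $M_i$ one-dimensional, so the write-up can be as short as one sentence invoking Proposition 4.4 plus the monotonicity remark.
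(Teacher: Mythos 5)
Your argument is correct and takes essentially the route the paper intends: for $\ell\le k$ the claim is Proposition 4.4 (equivalently, Corollary 6.3 of \cite{bv1} fed into Proposition 4.1, which is how you rederive it), and for $\ell>k$ it is the observation that $\dim\prod_{i=1}^{k}M_i\le k\le\ell$, so every map into the product already has slice-images of dimension $\le\ell$ and $\mathrm{FAP}(\ell)$ holds trivially. The paper states the corollary without proof, and your write-up supplies exactly the intended details, including the (strictly speaking unnecessary, since Proposition 4.1 only uses $\dim M_1\le n$) verification that the partial products have dimension exactly $\ell$.
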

\newpage


\begin{thebibliography}{999}

\bibitem{bv}
T.~Banakh and V.~Valov, \textit{General position properties in
Fiberwise Geometric Topology}, book in progress (ArXiv:1001.2494).

\bibitem {bv1}
T.~Banakh and V.~Valov, \textit{Approximation by light maps and
parametric Lelek maps}, Topology Appl. (to appear).



\bibitem{ca}R.~Cauty, {\em Convexit\'e topologique et prolongement des fonctions continues}, Compos. Math.
{\bf 27} (1973), 233--273.









\bibitem{mv}
E.~Matsuhashi, V.~Valov, \textit{Krasinkiewicz spaces and
parametric Krasinkiewicz maps}, preprint (arXiv:0802.4436v2).


\bibitem{lev} M.~Levin, {\em Bing maps and finite-dimensional maps},
Fund. Math. \textbf{151, 1} (1996), 47--52.







\bibitem{bp:98}
B.~Pasynkov, \textit{On geometry of continuous maps of countable
functional weight}, Fundam. Prikl. Matematika {\bf 4, 1} (1998),
155--164 (in Russian).


\bibitem{rs} D.~Repov\v{s} and P.~Semenov, {\em Continuous
selections of multivalued mappings}, Math. and its Appl.~{\bf 455},
Kluwer, Dordrecht (1998).

\bibitem{si} O.~Sipach\"{e}va, {\em On a class of free locally convex spaces},
Mat. Sb. {\bf 194:3} (2003), 25--52 (in Russian); translation in:
Sb. Math. {\bf 194:3-4} (2003), 333--360.




\bibitem{tv} M.~Tuncali and V.~Valov, {\em On dimensionally
restricted maps}, Fund. Math.~{\bf 175}, (2002), no.~1, 35--52.

\bibitem{tv1} M.~Tuncali and V.~Valov,  {\em On finite-dimensional maps II},
Topology and Appl. \textbf{132} (2003), 81--87.




\bibitem{vu} V.~Uspenskij, \textit{A remark on a question of R. Pol
concerning light maps}, Topology Appl. \textbf{103, 3} (2000),
291--293.
\end{thebibliography}
\end{document}